\newtheorem{theorem}{Theorem}[section]
\newtheorem{maintheorem}{Theorem}
\newtheorem{proposition}[theorem]{Proposition}
\newtheorem{lemma}[theorem]{Lemma}
\theoremstyle{definition}
\newtheorem{definition}[theorem]{Definition}
\newtheorem{example}[theorem]{Example}
\newtheorem{remark}[theorem]{Remark}
\numberwithin{equation}{section}
\newcommand{\dif}{\textsf{Diff}}
\newcommand{\sing}{\textsf{Sing}}
\newcommand{\dm}{\textsf{dim}}
\begin{document}
\centerline{\LARGE\bf On Levi-flat hypersurfaces tangent to}
\centerline{\LARGE\bf  holomorphic webs}
\vskip.3in
\centerline{\large Arturo Fern\'andez-P\'erez \footnote {Work supported by CNPq-Brazil.\\
 Keywords: Levi-flat hypersurfaces - Holomorphic webs\\
 Mathematical subjet classification: 32V40 - 14C21
}}
\vskip .2in

{\small\bf Abstract.} {\small\it We investigate germs of real analytic Levi-flat hypersurfaces tangent to germs of codimension one holomorphic webs. We introduce the notion of first integrals for local webs. In particular, we prove that a $k$-web with finitely many invariant analytic subvarieties through the origin tangent to a Levi-flat hypersurface has a holomorphic first integral.} 
\vskip .2in
{\small\bf R\'esum\'e.} {\small\it Nous \'etudions les germes d'hypersurfaces  r\'eelles analytiques Levi-plate tangente \`a les germes d'webs holomorphes codimension un. Nous introduisons la notion des int\'egrales premi\`eres des webs locales. En particulier, nous montrons que une $k$-web avec un nombre fini de feuilles invariant analytique par l'origine, tangente \`a une hypersurface Levi-plate poss\`ede une int\'egrale premi\`ere holomorphe.}
%\subjclass[2010]{Primary 32V40 - 14C21}
%\keywords{Levi-flat hypersurfaces - webs}
%\setcounter{tocdepth}{1}\tableofcontents

\section{Introduction}
\par In very general terms, a germ of codimension one $k$-web is a collection of $k$ germs of codimension one holomorphic foliations in ``general position". The study of webs was initiated by Blaschke and his school in the late 1920s. For a recent account of the theory, we refer the reader to \cite{pereira}. 
\par For instance, take $\omega\in Sym^{k}\Omega^{1}(\mathbb{C}^{2},0)$ defined by
 $$\omega=(dy)^{k}+a_{k-1}(dy)^{k-1}dx+\ldots+a_{0}(dx)^{k},$$
where $a_{j}\in\mathcal{O}_{2}$ for all $0\leq j\leq k-1$. Then $\mathcal{W}:\omega=0$, define a non-trivial $k$-web on $(\mathbb{C}^{2},0)$. In this paper we study webs and its relation with Levi-flat hypersurfaces. 
%\par Let $\mathbb{C}^{2}$ with coordinates $z=x+iy$, $w=s+it$ and consider the irreducible real analytic hypersurface 
%$M$ defined by $$M=\{t^{2}=4(y^{2}+s)y^{2}\}.$$ Brunella \cite{brunella} proved that $M$ is Levi-flat (i.e., its smooth part is foliated by smooth holomorphic curves in $\mathbb{C}^{2}$) and is tangent to 2-web $\mathcal{W}$ defined by $(dw)^{2}-4w(dz)^{2}=0$ (i.e., the leaves on $M$ are leaves of $\mathcal{W}$). The example has motivated the study of  singular Levi-flat real analytic hypersurfaces tangent to holomorphic webs. 
\par Let $M$ be a germ at $0\in\mathbb{C}^{n}$ of a real codimension one analytic irreducible analytic set. Since $M$ is real analytic
of codimension one, it can be decomposed into $M_{reg}$ and $\sing(M)$, where $M_{reg}$ is a germ of smooth real analytic hypersurface in $\mathbb{C}^{n}$ and $\sing(M)$, the singular locus, is contained in a proper analytic subvariety of lower dimension. 
 We shall say that $M$ is \textit{Levi-flat} if the complex distribution $L$ on $M_{reg}$  
\begin{align}
L_{p}:=T_{p}M\cap i T_{p}M\subset T_{p}M,\,\,\,\,\text{for any} \,\,p\in M_{reg}
\end{align}
is integrable, in Frobenius sense. It follows that $M_{reg}$ is smoothly foliated by immersed complex manifolds of complex dimension $n-1$. The foliation defined by $L$ is called the Levi foliation and will be denoted by $\mathcal{L}_{M}$. 
\par If $M$ is a real analytic smooth Levi-flat hypersurface, by a classic result of E. Cartan there exists a local holomorphic coordinates $(z_{1},\ldots,z_{n})\in\mathbb{C}^{n}$ such that $M$ can be represented by $M=\{\mathcal{I}m(z_{n})=0\}$. The situation if different if the hypersurface have singularities. 
%A singular real analytic hypersurface $M$ can be decomposed into $M_{reg}$ and $M_{sing}$, where $M_{reg}$ is a real analytic smooth hypersurface and $M_{sing}$, the singular locus, is contained in a proper analytic subvariety of lower dimension. We say that $M$ is {\textit {Levi-flat}} if $M_{reg}$ is Levi-flat. 
Singular Levi-flat real analytic hypersurfaces have been studied by Burns and Gong \cite{burns}, Brunella \cite{brunella}, Lebl \cite{lebl}, the author \cite{normal}, \cite{arturo} and many others.
\par Recently D.Cerveau and A. Lins Neto \cite{alcides} have studied codimension one holomorphic foliations tangent to singular Levi-flat hypersurfaces.
A codimension one holomorphic foliation $\mathcal{F}$ is tangent to $M$, if any leaf of $\mathcal{L}_{M}$ is also a leaf of $\mathcal{F}$. In \cite{alcides} it is proved that a germ of codimension one holomorphic foliation tangent to a real analytic Levi-flat hypersurface has a non-constant meromorphic first integral. In the same spirit, the authors propose a problem for webs, which is as follows:

{\bf{Problem.-}} Let $M$ be a germ at $0\in\mathbb{C}^{n}$, $n\geq 2$, of real analytic hypersurface Levi-flat.
Assume that there exists a singular codimension one $k$-web, $k\geq 2$, such that any leaf of the Levi foliation $\mathcal{L}_{M}$ on $M_{reg}$ is also a leaf of the web. Does the web has a
non-constant meromorphic first integral?.
\par By a meromorphic first integral we mean something like $f_{0}(x)+z.f_{1}(x)+\ldots+
z^{k}.f_{k}(x)=0$, where $f_{0},f_{1},\ldots,f_{k}\in\mathcal{O}_{n}$. In this situation, the web is obtained by the elimination of $z$ in the system given by
\[ \left\{ \begin{array}{ll}
     f_{0}+z.f_{1}+z^{2}.f_{2}+\ldots+z^{k}.f_{k}=0     & \\
      df_{0}+z.df_{1}+z^{2}.df_{2}+\ldots+z^{k}.df_{k}=0. & 
\end{array} \right. \]
\par In this work, we organize some results on singular Levi-flat hypersurfaces and holomorphic foliations which provide a best approach to study of webs and Levi-flats. Concerning the problem, we obtain an interesting result in a case very special (Theorem \ref{main-the}), the problem remains open in general.
\subsection{Local singular webs}\label{definition-web}
\par It is customary to define a germ of singular holomorphic foliation as an equivalence class $[\omega]$ of germs of holomorphic 1-forms in $\Omega^{1}(\mathbb{C}^{n},0)$ modulo multiplication by elements of $\mathcal{O}^{*}(\mathbb{C}^{n},0)$ such that any representative $\omega$ is integrable ( $\omega\wedge d\omega=0$ ) and with singular set $\sing(\omega)=\{p\in(\mathbb{C}^{n},0):\omega(p)=0\}$ of codimension at least two. 
\par An analogous definition can be made for codimension one $k$-webs.
 A germ at $(\mathbb{C}^{n},0)$, $n\geq 2$ of codimension one $k$-web $\mathcal{W}$ is an equivalence class $[\omega]$ of germs 
of $k$-symmetric 1-forms, that is sections of $Sym^{k}\Omega^{1}(\mathbb{C}^{n},0)$, modulo multiplication by $\mathcal{O}^{*}(\mathbb{C}^{n},0)$ such that a suitable representative $\omega$ defined in a connected neighborhood $U$ of the origin satisfies the following conditions:
\begin{enumerate}
\item The zero set of $\omega$ has codimension at least two. 
\item The 1-form $\omega$, seen as a homogeneous polynomial of degree $k$ in the ring $\mathcal{O}_{n}[dx_{1},\ldots,dx_{n}]$, is square-free.
\item (Brill's condition) For a generic $p\in U$, $\omega(p)$ is a product of $k$ linear forms.
\item (Frobenius's condition) For a generic $p\in U$, the germ of $\omega$ at $p$ is the product of $k$ germs of integrable 1-forms.
\end{enumerate}
\par Both conditions $(3)$ and $(4)$ are automatic for germs at $(\mathbb{C}^{2},0)$ of webs and non-trivial for germs at $(\mathbb{C}^{n},0)$ when $n\geq 3$.
\par We can think $k$-webs as first order differential equations
of degree $k$. The idea is to consider the germ of web as a meromorphic section of the projectivization of the cotangent bundle of $(\mathbb{C}^{n},0)$. This is a classical point view in the theory of differential equations, which has been recently explored in Web-geometry. For instance see \cite{cavalier}, \cite{lehmann}, \cite{yartey}. 
\subsection{The contact distribution}
\par Let us denote $\mathbb{P}:=\mathbb{P}T^{*}(\mathbb{C}^{n},0)$ the projectivization of the cotangent bundle of $(\mathbb{C}^{n},0)$
and $\pi:\mathbb{P}T^{*}(\mathbb{C}^{n},0)\rightarrow(\mathbb{C}^{n},0)$ the natural projection. Over a point $p$ the fiber $\pi^{-1}(p)$ parametrizes the one-dimensional subspaces of $T^{*}_{p}(\mathbb{C}^{n},0)$. On $\mathbb{P}$  there is a canonical codimension one distribution, the so called contact distribution $\mathcal{D}$. Its description in terms of a system of coordinates $x=(x_{1},\ldots,x_{n})$ of $(\mathbb{C}^{n},0)$ goes as follows: let $dx_{1},\ldots,dx_{n}$ be the basis of $T^{*}(\mathbb{C}^{n},0)$ associated to the coordinate system $(x_{1},\ldots,x_{n})$. Given a point $(x,y)\in T^{*}(\mathbb{C}^{n},0)$, we can write $y=\sum_{j=1}^{n}y_{j}dx_{j}$, $(y_{1},\ldots,y_{n})\in\mathbb{C}^{n}$. In this way, if $(y_{1},\ldots,y_{n})\neq 0$ then we set $[y]=[y_{1},\ldots.y_{n}]\in\mathbb{P}^{n-1}$ and $(x,[y])\in(\mathbb{C}^{n},0)\times\mathbb{P}^{n-1}\cong\mathbb{P}$. In the affine coordinate system $y_{n}\neq 0$ of $\mathbb{P}$, the distribution $\mathcal{D}$ is defined by $\alpha=0$, where
\begin{align}
\alpha=dx_{n}-\sum_{j=1}^{n-1}p_{j}dx_{j},\,\,\,\,\,\,\,\, p_{j}=-\frac{y_{j}}{y_{n}}\,\,\,\,\,\,\,\,\,(1\leq j\leq n-1).
\end{align}
The 1-form $\alpha$ is called the contact form.

\subsection{Webs as closures of meromorphic multi-sections}
\par Let us consider $X\subset\mathbb{P}$ a subvariety, not necessarily irreducible, but of pure dimension $n$. Let $\pi_{X}:X\rightarrow (\mathbb{C}^{n},0)$ be the restriction to $X$ of the projection $\pi$. Suppose also that $X$ satisfies the following conditions:
\begin{enumerate}
 \item The image under $\pi$ of every irreducible component of $X$ has dimension $n$.
\item The generic fiber of $\pi$ intersects $X$ in $k$ distinct smooth points and at these the differential $d\pi_{X}:T_{p}X\rightarrow T_{\pi(p)}(\mathbb{C}^{n},0)$ is surjective. Note that $k=\deg(\pi_{X})$.
\item The restriction of the contact form $\alpha$ to the smooth part of every irreducible component of $X$ is integrable. We denote $\mathcal{F}_{X}$ the foliation defined by $\alpha|_{X}=0$.
\end{enumerate}
\par We can define a germ $\mathcal{W}$ at $0\in\mathbb{C}^{n}$ of $k$-web as a triple $(X,\pi_{X},\mathcal{F}_{X})$. This definition is equivalent to  one given in Section \ref{definition-web}. In the sequel, $X$ will always be the variety associated to $\mathcal{W}$, the singular set of $X$ will be denoted by $\sing(X)$ and its the smooth part will be denoted by $X_{reg}$.
\begin{definition}
 Let $R$ be the set of points $p\in X$ where
\begin{itemize}
 \item either $X$ is singular, 
\item or the differential $d\pi_{X}:T_{p}X_{reg}\rightarrow T_{\pi(p)}(\mathbb{C}^{n},0)$ is not an isomorphism.
\end{itemize}
 \end{definition}
 The analytic set $R$ is called the criminant set of $\mathcal{W}$ and $\Delta_{\mathcal{W}}=\pi(R)$ the discriminant of $\mathcal{W}$. Note that $\dm(R)\leq n-1$.
\begin{remark}
Let $\omega\in Sym^{k}\Omega_{1}(\mathbb{C}^{n},0)$ and assume that it defines a $k$-web $\mathcal{W}$ with variety $X$. Then $X$ is irreducible if, and only if, $\omega$ is irreducible in the ring $\mathcal{O}_{n}[dx_{1},\ldots,dx_{n}]$. In this case we say that the web is irreducible.
\end{remark}
\par  Let $M$ be a germ at $0\in\mathbb{C}^{n}$ of a real analytic Levi-flat hypersurface.  
\begin{definition}\label{web-tang}
 We say that $M$ is tangent to $\mathcal{W}$ if any leaf of the Levi foliation $\mathcal{L}_{M}$ on $M_{reg}$ is also a leaf of $\mathcal{W}$.
\end{definition} 
\subsection{First integrals for webs}
 
\begin{definition}
We say that $\mathcal{W}$ a $k$-web has a meromorphic first integral if, and only if, there exists $$P(z)=f_{0}+z.f_{1}+\ldots+z^{k}.f_{k}\in\mathcal{O}_{n}[z],$$ where
$f_{0},\ldots,f_{k}\in\mathcal{O}_{n}$, such that every irreducible component of the hypersurface $(P(z_{0})=0)$
is a leaf of $\mathcal{W}$, for all $z_{0}\in(\mathbb{C},0)$. 
\end{definition}
\begin{definition}\label{def-inte}
We say that $\mathcal{W}$ a $k$-web has a holomorphic first integral if, and only if, there exists $$P(z)=f_{0}+z.f_{1}+\ldots+ z^{k-1}.f_{k-1}+z^{k}\in\mathcal{O}_{n}[z],$$ where
$f_{0},\ldots,f_{k-1}\in\mathcal{O}_{n}$, such that every irreducible component of the hypersurface $(P(z_{0})=0)$
is a leaf of $\mathcal{W}$, for all $z_{0}\in(\mathbb{C},0)$. 
\end{definition}
\par We will prove a result concerning the situation of definitions \ref{web-tang} and \ref{def-inte}. 

\begin{maintheorem}\label{main-the}
Let $\mathcal{W}$ be a germ at $0\in\mathbb{C}^{n}$, $n\geq{2}$ of $k$-web defined by 
$$\omega=\underset{i_{1},\ldots,i_{n}\geq 0}{ \sum_{i_{1}+\ldots+i_{n}=k}} a_{i_{1},\ldots, i_{n}}(z)dz_{1}^{i_{1}}\ldots dz_{n}^{i_{n}},$$
where $a_{i_{1},\ldots, i_{n}}\in\mathcal{O}_{n}$ and $a_{0,0,\ldots,0,k}(0)\neq 0$. Suppose that $\mathcal{W}$ is 
tangent to a germ  at $0\in\mathbb{C}^{n}$ of an irreducible real-analytic Levi-flat hypersurface $M$. Furthermore,  
assume that $\mathcal{W}$ is irreducible and has finitely many invariant analytic subvarieties through the origin. Let $X$ be the variety associated to $\mathcal{W}$. Then $\mathcal{W}$ has a non-constant holomorphic first integral, if one of the following conditions is fulfilled :
\begin{enumerate}
 \item If $n=2$.
\item If $n\geq 3$ and $cod_{X_{reg}}(\sing(X))\geq 2$. 
\end{enumerate}
Moreover, if $P(z)=f_{0}+z.f_{1}+\ldots+ z^{k-1}.f_{k-1}+z^{k}\in\mathcal{O}_{n}[z]$ is a holomorphic first integral for $\mathcal{W}$, then $M=(F=0)$, where $F$ is obtained by the elimination of $z$ in the system given by
\[ \left\{ \begin{array}{ll}
     f_{0}+z.f_{1}+z^{2}.f_{2}+\ldots+z^{k-1}.f_{k-1}+z^{k}=0     & \\
      \bar{f}_{0}+z.\bar{f}_{1}+z^{2}.\bar{f}_{2}+\ldots+z^{k-1}.\bar{f}_{k-1}+z^{k}=0. & 
\end{array} \right. \]

\end{maintheorem}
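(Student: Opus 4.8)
The plan is to exploit the hypothesis $a_{0,\ldots,0,k}(0)\neq 0$ to write the web, after a unit multiplication, as a monic degree-$k$ polynomial in $dz_n$ with coefficients involving $dz_1,\ldots,dz_{n-1}$, and to translate tangency to $M$ into a condition on the Levi foliation $\mathcal{L}_M$. The starting point is the result of Cerveau--Lins Neto quoted in the introduction: each leaf of $\mathcal{L}_M$ lies in a leaf of the web, and the web being irreducible means its variety $X$ is irreducible; the induced foliation $\mathcal{F}_X$ on $X$ is a single codimension one holomorphic foliation, to which $M$ lifts as a Levi-flat hypersurface $\widetilde M\subset X\times\overline{(\mathbb{C}^n,0)}$ tangent to $\mathcal{F}_X$. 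First I would apply the Cerveau--Lins Neto theorem to $\mathcal{F}_X$ on the (smooth, under hypothesis (2), or surface, under hypothesis (1)) variety $X$ to obtain a non-constant meromorphic first integral $h$ of $\mathcal{F}_X$; the tangency of $\widetilde M$ forces $M$ to be a level-type set built from $h$ and its conjugate.

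Next I would upgrade ``meromorphic first integral of $\mathcal{F}_X$'' to ``holomorphic first integral of the web $\mathcal{W}$'' using the finiteness hypothesis on invariant analytic subvarieties through the origin. The point is that a meromorphic first integral $h=p/q$ of $\mathcal{F}_X$ whose polar and indeterminacy loci are web-invariant would, if nontrivial, produce a pencil and hence infinitely many invariant hypersurfaces (the fibers $\{p-cq=0\}$, $c\in\mathbb{P}^1$) through the origin — contradicting finiteness unless the ``first integral'' is really a single algebraic relation. So finiteness collapses the pencil: $h$ must have a finite image, i.e. pushing forward along $\pi_X$ the function $z$ (the tautological section realizing $X$ as a multisection) satisfies a monic polynomial equation $P(z)=z^k+f_{k-1}z^{k-1}+\cdots+f_0=0$ with $f_j\in\mathcal{O}_n$, whose coefficients are, up to sign, the elementary symmetric functions of the $k$ local branches $z=\varphi_i(x)$ of $X$ over a generic point. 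Here the monic normalization is exactly where $a_{0,\ldots,0,k}(0)\neq 0$ is used: it guarantees $X$ is a genuine $k$-sheeted cover with no branch ``at infinity'' in the $\mathbb{P}^{n-1}$ direction $[0:\cdots:0:1]$, so the $\varphi_i$ are holomorphic and the symmetric functions $f_j\in\mathcal{O}_n$. That $(P(z_0)=0)$ is a leaf of $\mathcal{W}$ for every $z_0$ is then immediate from the construction, giving Definition \ref{def-inte}.

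For the final elimination formula $M=(F=0)$, I would argue as follows. A leaf $L$ of $\mathcal{L}_M$ is simultaneously a leaf of $\mathcal{W}$, hence by the previous paragraph is contained in some $\{P(z_0)=0\}$; since $M$ is real-analytic and irreducible and $\mathcal{L}_M$ foliates $M_{reg}$ densely, $M$ must be the union over an appropriate one-real-parameter family of such complex hypersurfaces. Intrinsic reality of $M$ ($M=\overline M$) forces the parameter set to be symmetric under conjugation, and a dimension count (the fibers are complex hypersurfaces, $M$ is real codimension one) shows the family is $1$-real-dimensional and defines $M$ as the locus where $P(z)=0$ and $\overline{P}(z)=0$ have a common root $z$; eliminating $z$ between $P$ and $\overline P$ (i.e. taking the resultant $\mathrm{Res}_z(P,\overline P)$, which is real-analytic in $x,\bar x$) yields a real-analytic function $F$ with $M=(F=0)$. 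I should check $F\not\equiv 0$ — this is where irreducibility of $M$ and the fact that $M$ is not complex are used, since if $P$ and $\overline P$ shared a factor identically the resultant would vanish, and one rules this out because such a common factor would make $M$ (locally) a complex hypersurface.

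The main obstacle I anticipate is the passage from ``meromorphic first integral of the single foliation $\mathcal{F}_X$ on the possibly-singular variety $X$'' to ``holomorphic first integral of the web downstairs,'' controlling the poles and the branch locus $R$ of $\pi_X$. Concretely: a priori the meromorphic first integral of $\mathcal{F}_X$ need not descend to something polynomial-of-degree-$k$ in $z$, and its pole divisor could interact badly with the criminant set $R$; the condition $\mathrm{cod}_{X_{reg}}(\sing(X))\geq 2$ (case 2) or $n=2$ (case 1) is precisely what is needed to extend holomorphic objects across $\sing(X)$ by Hartogs/Levi-type extension, and the finiteness-of-invariant-subvarieties hypothesis is what kills the pencil and forces the first integral to be the tautological $z$ satisfying a monic relation. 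Making these two inputs interact cleanly — i.e. showing the extended, pencil-free first integral is exactly $\prod_i(z-\varphi_i(x))$ with holomorphic symmetric functions — is the technical heart of the argument.
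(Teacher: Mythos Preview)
Your outline has the right architecture --- lift $M$ to $X$, study $\mathcal{F}_X$, produce a first integral, push it down via Proposition~\ref{prop-hof}/Lemma~\ref{web-integral} --- and this is indeed what the paper does. But two of your key steps have genuine gaps.

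\medskip

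\textbf{You cannot invoke Cerveau--Lins Neto directly on $X$.} Their theorem is for codimension-one holomorphic foliations on $(\mathbb{C}^n,0)$, i.e.\ on a \emph{smooth} ambient germ. Here $X$ (the surface $S$ when $n=2$) is in general singular at the relevant point $p_0\in\pi^{-1}(0)$, and the foliation $\mathcal{F}_X$ is the restriction of the contact distribution to a singular surface. The paper does not apply Cerveau--Lins Neto as a black box; instead, for $n=2$, it reproves an adapted version from scratch: it lifts $M_{\mathrm{reg}}$ to a Levi-flat $N\subset S$ via Brunella's extension results (Lemma~\ref{brunella-lemma}, Proposition~\ref{brunella-proposition}), resolves the singularity $\sigma:(\tilde S,D)\to(S,p_0)$, shows every singularity of $\tilde{\mathcal F}=\sigma^*\mathcal{F}_S$ on $D$ is a saddle with holomorphic first integral (Proposition~\ref{proposition-reducido}), and then runs a holonomy/Mattei--Moussu argument on a complete transversal to conclude $\mathcal{F}_S$ has a holomorphic first integral. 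For $n\ge 3$ the paper does \emph{not} work on $X$ directly either: it slices by a generic $2$-plane $E$ (Lemma~\ref{web-trans}), applies the $n=2$ case to $\mathcal{W}|_E$, and then extends the resulting first integral from the surface $S\subset X$ to all of $X$ using $\mathrm{cod}_{X_{\mathrm{reg}}}(\sing(X))\ge 2$ and Levi's extension theorem. Your plan skips both the resolution step and the $2$-plane slice, and there is no off-the-shelf theorem filling that gap.

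\medskip

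\textbf{Your use of the finiteness hypothesis is inverted.} You argue that a meromorphic first integral $h=p/q$ of $\mathcal{F}_X$ would give a pencil $\{p-cq=0\}$ of invariant hypersurfaces through the origin, contradicting finiteness ``unless the first integral is really a single algebraic relation''. This is not how the hypothesis functions. A \emph{holomorphic} first integral produces infinitely many invariant leaves, but only finitely many of them (the branches of a single fiber) pass through a given point; there is no contradiction. The finiteness hypothesis is used in the paper at a completely different stage: it guarantees that $\mathcal{F}_S$ is \emph{non-dicritical} at $p_0$ (finitely many separatrices), so that after resolution the exceptional divisor $D$ is $\tilde{\mathcal F}$-invariant and the Mattei--Moussu machinery applies. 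It is not a device to upgrade meromorphic integrals to holomorphic ones, and your ``collapse the pencil'' heuristic does not lead to the monic polynomial $P_g$; that polynomial comes from Proposition~\ref{prop-hof} applied to the holomorphic first integral $g$ of $\mathcal{F}_X$ once it has been constructed, not from the tautological fiber coordinate.
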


\begin{remark} 
Under the hypotheses of Theorem \ref{main-the}, if $n=2$ and $k=1$, $\mathcal{W}$ is a non-dicritical holomorphic foliation at $(\mathbb{C}^{2},0)$ tangent to a germ of an irreducible real analytic Levi-flat hypersurface $M$, then Theorem 1 given by Cerveau and Lins Neto \cite{alcides} assures that $\mathcal{W}$ has a non-constant holomorphic first integral. In this sense, our theorem is a generalization of result of Cerveau and Lins Neto.
\end{remark} 
\begin{remark}
 Let $\mathcal{W}$ a germ at $0\in\mathbb{C}^{n}$, $n\geq{2}$, of a smooth $k$-web tangent to a germ at $0\in\mathbb{C}^{n}$ of an irreducible real codimension one submanifold $M$. In other words, $\mathcal{W}=\mathcal{F}_{1}\boxtimes\ldots\boxtimes\mathcal{F}_{k}$ is a generic superposition of $k$ germs at $0\in\mathbb{C}^{n}$ of smooth foliations $\mathcal{F}_{1},\ldots,\mathcal{F}_{k}$. In this case the irreducibility and tangency conditions to $M$ implies the existence of a unique $i\in \{1,\ldots,k\}$ such that $\mathcal{F}_{i}$ is tangent to $M$. Therefore we can find a coordinates system $z_{1},\ldots,z_{n}$ of $\mathbb{C}^{n}$ such that $\mathcal{F}_{i}$ is defined by $dz_{n}=0$ and $M=(\mathcal{I}m(z_{n})=0)$.  
\end{remark}

\section{The foliation associated to a web}
 In this section, we prove a key lemma which will be used in the proof of main theorem.
\par Since the restriction of $\mathcal{D}$  to $X_{reg}$ is integrable, it defines a foliation $\mathcal{F}_{X}$, which in general is a singular foliation. Given $p\in(\mathbb{C}^{n},0)\backslash\Delta_{\mathcal{W}}$, $\pi_{X}^{-1}(p)=\{q_{1},\ldots,q_{k}\}$, where $q_{i}\neq q_{j}$, if $i\neq j$,
($\deg(\pi_{X})=k$), denote by $\mathcal{F}_{X}^{i}$ the germ of $\mathcal{F}_{X}$ at $q_{i}$, $i=1,\ldots,k$.
\par The projections $\pi_{*}(\mathcal{F}_{X}^{i}):=\mathcal{F}_{p}^{i}$ define $k$ germs of codimension one foliations at $p$.
\begin{definition}
A leaf of the web $\mathcal{W}$ is, by definition, the projection on $(\mathbb{C}^{n},0)$ of a leaf of $\mathcal{F}_{X}$.
\end{definition}
\begin{remark}
Given $p\in(\mathbb{C}^{n},0)\backslash\Delta_{\mathcal{W}}$, and $q_{i}\in\pi_{X}^{-1}(p)$, the projection $\pi_{X}(L_{i})$ of the leaf $L_{i}$ of $\mathcal{F}_{X}$ through $q_{i}$, gives rise to a leaf of $\mathcal{W}$ through $p$. In particular, $\mathcal{W}$ has at most $k$ leaves through $p$. 
\end{remark}
\par We will use the following proposition (cf. \cite{gunning} Th. 5, pg. 32). Let $\mathcal{O}(X)$ denote the ring of holomorphic functions on $X$.

\begin{proposition}\label{prop-hof}
Let $V$ be an analytic variety. If $\pi:V\rightarrow W$ is a finite branched holomorphic covering of pure order $k$ over an open subset $W\subseteq\mathbb{C}^{n}$, then to each holomorphic function $f\in\mathcal{O}(V)$ there is a canonically associated monic polynomial $P_{f}(z)\in\mathcal{O}_{n}[z]\subseteq\mathcal{O}(V)[z]$ of degree $k$ such that $P_{f}(f)=0$ in $\mathcal{O}(V)$. 
\end{proposition}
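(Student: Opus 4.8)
The plan is to construct $P_{f}$ explicitly as the characteristic polynomial whose roots are the values of $f$ along the $k$ sheets of the covering, and then to show that its coefficients, a priori defined only away from the branch locus, extend holomorphically across it by Riemann's extension theorem. First I would let $B\subset W$ denote the critical locus of $\pi$ (the set of critical values), which is a proper analytic subvariety, so that $W':=W\setminus B$ is open and dense and over it $\pi$ restricts to an unbranched covering of order $k$. Over a small polydisc $U\subset W'$ the covering trivializes, and one can choose holomorphic sections $v_{1},\ldots,v_{k}:U\rightarrow V$ with $\pi^{-1}(U)=\bigsqcup_{j}v_{j}(U)$ and $v_{i}(w)\neq v_{j}(w)$ for $i\neq j$. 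I would then set
$$P_{f}(z,w)=\prod_{j=1}^{k}\bigl(z-f(v_{j}(w))\bigr)=z^{k}-\sigma_{1}(w)z^{k-1}+\cdots+(-1)^{k}\sigma_{k}(w),$$
where $\sigma_{1},\ldots,\sigma_{k}$ are the elementary symmetric functions of $f(v_{1}(w)),\ldots,f(v_{k}(w))$. Since each $\sigma_{j}$ is symmetric in the branches, it does not depend on the local ordering of the sections, so the $\sigma_{j}$ glue to well-defined holomorphic functions on all of $W'$. This symmetric-function construction is what makes the association \emph{canonical}.

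The crucial step is to extend the $\sigma_{j}$ across $B$. Here I would use that $\pi$ is finite, hence proper with finite fibers: near any $w_{0}\in B$, all preimages $v_{j}(w)$ for $w$ in a small neighborhood remain inside a fixed compact subset of $V$ containing the finite fiber $\pi^{-1}(w_{0})$. As $f$ is holomorphic on $V$, it is bounded on this compact set, so each $\sigma_{j}$ is bounded near $w_{0}$. Because $B$ is an analytic subset of the manifold $W\subseteq\mathbb{C}^{n}$ of codimension at least one, Riemann's removable-singularity theorem guarantees that each bounded holomorphic function $\sigma_{j}$ on $W'$ extends holomorphically to $W$, that is $\sigma_{j}\in\mathcal{O}_{n}$. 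This produces the desired monic $P_{f}(z)\in\mathcal{O}_{n}[z]$ of degree $k$.

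Finally I would verify that $P_{f}(f)=0$ in $\mathcal{O}(V)$. Consider $g:=P_{f}(f)\in\mathcal{O}(V)$, where the coefficients of $P_{f}$ are pulled back via $\pi$. For any $v\in\pi^{-1}(W')$ with $w=\pi(v)$, the value $f(v)$ equals $f(v_{j}(w))$ for some $j$, which is by construction a root of $P_{f}(\cdot,w)$; hence $g(v)=0$. Thus $g$ vanishes on the dense open set $\pi^{-1}(W')$, whose complement $\pi^{-1}(B)$ is a proper analytic subvariety of $V$, since the finiteness of $\pi$ forbids a drop in codimension. By the identity principle, applied on each irreducible component, $g\equiv 0$ on $V$.

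The hard part is the extension across the branch locus: it is exactly where one must combine the properness of $\pi$ (to obtain local boundedness of the $\sigma_{j}$) with the analyticity and codimension of $B$ (to invoke Riemann extension). The hypothesis of \emph{pure} order $k$ enters here as well, guaranteeing that the product defining $P_{f}$ has precisely $k$ factors over all of $W'$, so that the extended polynomial is genuinely monic of degree $k$ rather than of lower degree.
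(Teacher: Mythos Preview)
Your argument is correct and is precisely the classical construction: form the characteristic polynomial via the elementary symmetric functions of $f$ on the sheets, glue by symmetry over the unbranched locus, extend across the branch locus by combining properness of $\pi$ with Riemann's removable-singularity theorem, and conclude $P_{f}(f)=0$ by density. Note, however, that the paper does not supply its own proof of this proposition; it is quoted verbatim from Gunning (\cite{gunning}, Th.~5, p.~32) and used as a black box. Your proof is essentially the one found there, so there is no meaningful difference in approach to compare. One small notational slip: you write $\sigma_{j}\in\mathcal{O}_{n}$, but strictly speaking the extended $\sigma_{j}$ lie in $\mathcal{O}(W)$; the paper's phrasing $P_{f}(z)\in\mathcal{O}_{n}[z]$ is itself a mild abuse, since in the application everything is taken as germs at the origin.
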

We have now the following lemma. 
%The proof is an easy adaption of an argument of I.Pan (cf. \cite{pan}).
\begin{lemma}\label{web-integral}
Suppose that $(X,\pi_{X},\mathcal{F}_{X})$ defines a $k$-web $\mathcal{W}$ on $(\mathbb{C}^{n},0)$, $n\geq 2$, where $X$ is an irreducible subvariety of $\mathbb{P}$. If $\mathcal{F}_{X}$ has a non-constant holomorphic first integral then $\mathcal{W}$ also has a holomorphic first integral. 
\end{lemma}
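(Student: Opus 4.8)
The plan is to transfer the first integral of $\mathcal{F}_X$ on $X$ down to $(\mathbb{C}^n,0)$ using the branched covering structure $\pi_X\colon X\to(\mathbb{C}^n,0)$ of order $k$, and then to check that the resulting polynomial is a first integral of $\mathcal{W}$ in the sense of Definition \ref{def-inte}. First I would let $g\in\mathcal{O}(X)$ be the given non-constant holomorphic first integral of $\mathcal{F}_X$; that is, $g$ is constant along the leaves of $\mathcal{F}_X$, equivalently $dg\wedge\alpha|_{X_{reg}}=0$. Since $X$ is irreducible and $\pi_X$ is a finite branched holomorphic covering of pure order $k$ over a neighborhood $W$ of the origin, Proposition \ref{prop-hof} associates to $g$ a canonical monic polynomial
\[
P_g(z)=z^k+f_{k-1}z^{k-1}+\cdots+f_1 z+f_0\in\mathcal{O}_n[z],\qquad f_j\in\mathcal{O}_n,
\]
with $P_g(g)=0$ in $\mathcal{O}(X)$. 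Concretely, over a generic $p\in W\setminus\Delta_{\mathcal{W}}$ one has $\pi_X^{-1}(p)=\{q_1,\dots,q_k\}$ and the coefficients $f_j$ are, up to sign, the elementary symmetric functions of $g(q_1),\dots,g(q_k)$; this is what makes them single-valued holomorphic on $W$ (a priori on $W\setminus\Delta_{\mathcal{W}}$, then extended across the discriminant by Riemann's extension theorem, since $\Delta_{\mathcal{W}}$ has codimension one and the $f_j$ are locally bounded).

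The heart of the argument is to show that $P_g$ is a holomorphic first integral of $\mathcal{W}$: for each $z_0\in(\mathbb{C},0)$, every irreducible component of $(P_g(z_0)=0)\subset(\mathbb{C}^n,0)$ is a leaf of $\mathcal{W}$. I would argue as follows. Over $W\setminus\Delta_{\mathcal{W}}$ the covering $\pi_X$ is unramified, so $X$ decomposes (locally, on a simply connected piece) into $k$ disjoint graphs, and $g$ restricted to the sheet through $q_i$ pushes forward to a function $g_i$ which is a first integral of the foliation $\mathcal{F}_p^i:=\pi_*(\mathcal{F}_X^i)$; the level sets $(g_i=z_0)$ are thus unions of leaves of $\mathcal{W}$. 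Now $(P_g(z_0)=0)=\bigcup_{i=1}^k(g_i=z_0)$ over the generic locus, because $P_g(z_0)(p)=\prod_{i=1}^k\bigl(z_0-g(q_i)\bigr)$ vanishes at $p$ exactly when $z_0=g(q_i)$ for some $i$. Hence, away from $\Delta_{\mathcal{W}}$, each level hypersurface $(P_g(z_0)=0)$ is a union of leaves of $\mathcal{W}$. Finally I would take closures: an irreducible component $Y$ of $(P_g(z_0)=0)$ is the closure of its generic part $Y\setminus\Delta_{\mathcal{W}}$, which is a union of leaves of $\mathcal{W}$; since the foliation $\mathcal{F}_X$ and hence the web extends across the criminant/discriminant, $Y$ is invariant by $\mathcal{W}$ and, being irreducible, is a single leaf (in the sense of the closure of a leaf of $\mathcal{F}_X$ projected down).

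Two points will need care, and the second is the main obstacle. The minor one: $g$ being non-constant on $X$ does not immediately force $P_g$ to be non-constant as a first integral — one must rule out the degenerate possibility that all $f_j$ are constant, i.e. that $g$ takes values in a fixed finite set; but $X$ is irreducible and $g$ is holomorphic and non-constant, so $g(X)$ is an open neighborhood of a point in $\mathbb{C}$, forcing at least one $f_j$ to be a non-constant element of $\mathcal{O}_n$, so $P_g$ is genuinely non-constant. The main obstacle is the behavior over the discriminant $\Delta_{\mathcal{W}}$: there the sheets of $X$ collide, the decomposition into $k$ graphs breaks down, and one must confirm both that the symmetric functions $f_j$ extend holomorphically across $\Delta_{\mathcal{W}}$ (Riemann extension, using that $g$ is bounded near the compact fibers and $\Delta_{\mathcal{W}}$ is a proper analytic subset) and that no spurious component of $(P_g(z_0)=0)$ can be created which lies entirely inside $\Delta_{\mathcal{W}}$ and fails to be a leaf. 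The latter is handled by the identity $P_g(g)=0$ on all of $X$ (not just the generic part): it shows $(P_g(z_0)=0)$ is exactly $\pi_X(g^{-1}(z_0))$, and since $g^{-1}(z_0)$ is a union of leaves of $\mathcal{F}_X$ (as $g$ is a first integral of $\mathcal{F}_X$ on all of $X_{reg}$), its projection is a union of leaves of $\mathcal{W}$ by definition. This completes the proof.
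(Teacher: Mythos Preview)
Your proposal is correct and follows essentially the same route as the paper: both apply Proposition~\ref{prop-hof} to the first integral $g\in\mathcal{O}(X)$ via the branched covering $\pi_X$ to obtain the monic polynomial $P_g\in\mathcal{O}_n[z]$, and then verify over the generic locus $(\mathbb{C}^n,0)\setminus\Delta_{\mathcal{W}}$ that the level sets of $P_g$ are unions of leaves of $\mathcal{W}$. The paper packages the verification through the map $\varphi=(\pi_X,g)\colon X\to(\mathbb{C}^n,0)\times\mathbb{C}$ and its image hypersurface $G=\varphi(X)$, whereas you argue directly with the local sheets $g_i$ and the identity $(P_g(z_0)=0)=\pi_X(g^{-1}(z_0))$; your treatment of the discriminant and of non-constancy is in fact more explicit than the paper's.
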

\begin{proof}
Let $g\in\mathcal{O}(X)$ be the first integral for $\mathcal{F}_{X}$. By Proposition \ref{prop-hof}, there exists a monic polynomial $P_{g}(z)\in\mathcal{O}_{n}[z]$ of degree $k$ such that $P_{g}(g)=0$ in $\mathcal{O}(X)$. 
Write  $$P_{g}(z)=g_{0}+z.g_{1}+\ldots+z^{k-1}.g_{k-1}+z^{k},$$ where $g_{0},\ldots,g_{k-1}\in\mathcal{O}_{n}$. 

\textbf{Assertion}.-- $P_{g}$ define a holomorphic first integral for $\mathcal{W}$.

\par Let $U\subseteq(\mathbb{C}^{n},0)\backslash\Delta_{\mathcal{W}}$ be an open subset and let $\varphi:X\rightarrow(\mathbb{C}^{n},0)\times\mathbb{C}$ be defined by $\varphi=(\pi_{X},g)$. Take a leaf $L$ of $\mathcal{W}|_{U}$.
Then there is $z\in\mathbb{C}$ such that the following diagram 
$$\xymatrix{ 
\ar[dr]_{\pi_{X}} \pi^{-1}_{X}(U)\cap\varphi^{-1}(L\times \{z\}) \ar[rr]^{\varphi} &  & L\times \{z\} \ar[dl]^{pr_{1}}\\
             & L
} $$
is commutative, where $pr_{1}$ is the projection on the first coordinate. It follows that $L$ is a leaf of $\mathcal{W}$ if and only if $g$ is constant along of each connected component of $\pi_{X}^{-1}(L)$ contained in $\varphi^{-1}(L\times \{z\})$.

\par Consider now the hypersurface $G=\varphi(X)\subset (\mathbb{C}^{n},0)\times\mathbb{C}$ which is the closure of set
$$\{(x,s)\in U\times\mathbb{C}: g_{0}(x)+s.g_{1}(x)+\ldots+s^{k-1}.g_{k-1}(x)+s^{k}=0\}.$$
\par Let $\psi:(\mathbb{C}^{n},0)\times \mathbb{C}\rightarrow (\mathbb{C}^{n},0)$ be the usual projection and denote by $Z\subset(\mathbb{C}^{n},0)$ the analytic subset such that the restriction to $G$ of $\psi$ not is a finite branched covering. Notice that for all $x_{0}\in(\mathbb{C}^{n},0)\backslash Z$,
the equation 
$$g_{0}(x)+s.g_{1}(x)+\ldots+s^{k-1}.g_{k-1}(x)+s^{k}=0$$
defines $k$ analytic hypersurfaces pairwise transverse in $x_{0}$ and therefore correspond to leaves of $\mathcal{W}$.  \end{proof}

\section{Examples}
\par This section is devoted to give some examples of Levi-flat hypersurfaces tangent to holomorphic foliations or webs. 
\begin{example}
Take a non constant holomorphic function $f:(\mathbb{C}^{n},0)\rightarrow(\mathbb{C},0)$ and set $M=(\mathcal{I}m(f)=0)$. Then $M$ is Levi-flat and $M_{sing}$ is the set of critical points of $f$ lying on $M$. Leaves of the Levi foliation on $M_{reg}$ are given by $\{f=c\}$, $c\in\mathbb{R}$. Of course, $M$ is tangent to a singular holomorphic foliation generated by the kernel of $df$.
\end{example}
\begin{example}\label{example-alcides}(\cite{alcides})
 Let $f_{0},f_{1},\ldots,f_{k}\in\mathcal{O}_{n}$, $n\geq{2}$, be irreducible germs of holomorphic
functions, where $k\geq{2}$. Consider the family of hypersurfaces
                    $$G:=\{G_{s}:= f_{0}+sf_{1}+\ldots+s^{k}f_{k}/ s\in\mathbb{R}\}.$$ 
By eliminating the real variable $s$ in the system $G_{s}=\bar{G}_{s}=0$, we obtain a real
analytic germ $F:(\mathbb{C}^{n},0)\rightarrow(\mathbb{R},0)$ such that any complex hypersurface $(G_{s}=0)$
is contained in the real hypersurface $(F = 0)$. For instance, in the case $k=2$, we
obtain

 \[F=det\left( \begin{array}{cccc}
f_{0} & f_{1} & f_{2} & 0 \\
0 & f_{0} & f_{1} & f_{2} \\
\bar{f}_{0} & \bar{f}_{1} & \bar{f}_{2} & 0\\
0 & \bar{f}_{0} & \bar{f}_{1} & \bar{f}_{2}
\end{array} \right)=\]
\begin{equation}\label{example-web}
=f^{2}_{0}.\bar{f}^{2}_{2}+\bar{f}^{2}_{0}.f^{2}_{2}+f_{0}.f_{2}.\bar{f}^{2}_{1}+\bar{f}_{0}.\bar{f}_{2}.f_{1}^{2}-
|f_{1}|^{2}(f_{0}.\bar{f}_{2}+\bar{f}_{0}.f_{2})-2|f_{0}|^{2}.|f_{2}|^{2}.
\end{equation}
which comes from the elimination of $s$ in the system
$$f_{0}+s.f_{1}+s^{2}.f_{2}=\bar{f}_{0}+s.\bar{f}_{1}+s^{2}.\bar{f}_{2}=0.$$
\par  We would like to observe that the examples of this type are tangent to singular
webs. The web is obtained by the elimination of $s$ in the system given by
\[ \left\{ \begin{array}{ll}
     f_{0}+s.f_{1}+s^{2}.f_{2}+\ldots+s^{k}.f_{k}=0     & \\
      df_{0}+s.df_{1}+s^{2}.df_{2}\ldots+s^{k}.df_{k}=0 & 
\end{array} \right. \] 
In the case we get a $2$-web given by the implicit differential equation $\Omega = 0$, where 
\[\Omega=det\left( \begin{array}{cccc}
f_{0} & f_{1} & f_{2} & 0 \\
0 & f_{0} & f_{1} & f_{2} \\
df_{0} & df_{1} & df_{2} & 0\\
0 & df_{0} & df_{1} & df_{2}
\end{array} \right)\]
\end{example}
 \par This example shows that, although $\mathcal{L}_{M}$ is a foliation on $M_{reg}\subset M=(F=
0)$, in general it is not tangent to a germ of holomorphic foliation at $(\mathbb{C}^{n},0)$.
\begin{example}\label{cla-equa}[Clairaut's equations]
 Clairaut's equations are tangent to Levi-flat hypersurfaces. Consider the first-order implicit differential equation
\begin{equation}\label{clai-web}
y=xp+f(p), 
\end{equation}
where $(x,y)\in\mathbb{C}^{2}$, $p=\frac{dy}{dx}$ and $f\in\mathbb{C}[p]$ is a polynomial of degree $k$, the equation (\ref{clai-web}) define a $k$-web $\mathcal{W}$ on $(\mathbb{C}^{2},0)$. The variety $S$ associated to $\mathcal{W}$ is given by $(y-xp-f(p)=0)$ and the foliation $\mathcal{F}_{S}$ is defined by $\alpha|_{S}=0$, where $\alpha=dy-pdx$.
In the chart $(x,p)$ of $S$, we get $\alpha|_{S}=(x+f'(p))dp$. The criminant set of $\mathcal{W}$ is given by $$R=(y-xp-f(p)=x+f'(p)=0).$$ 
\par Observe that $\mathcal{F}_{S}$ is tangent to $S$ along $R$ and has a non-constant first integral $g(x,p)=p$. Denote by $\pi_{S}:S\rightarrow (\mathbb{C}^{2},0)$ the restriction to $S$ of the usual projection $\pi:\mathbb{P}\rightarrow (\mathbb{C}^{2},0)$, then the leaves of $\mathcal{F}_{S}$ project by $\pi_{S}$ in leaves of $\mathcal{W}$. Those leaves are as follows
\begin{align}
-y+s.x+f(s)=0,  
\end{align}
where $s$ is a constant. By the elimination of the variable $s\in\mathbb{R}$ in the system
\[ \left\{ \begin{array}{ll}
     -y+s.x+f(s)=0     & \\
      -\bar{y}+s.\bar{x}+\overline{f(s)}=0, & 
\end{array} \right. \] 
we obtain a Levi-flat hypersurface tangent to $\mathcal{W}$. In particular, Clairaut's equation has a holomorphic first integral.
\end{example}

\section{Lifting of Levi-flat hypersurfaces to the cotangent bundle}
In this section we give some remarks about the lifting of a Levi-flat hypersurface to the cotangent bundle of $(\mathbb{C}^{n},0)$. 
\par Let $\mathbb{P}$ be as before, the projectivized cotangent bundle of $(\mathbb{C}^{n},0)$ and $M$ an irreducible real analytic Levi-flat at $(\mathbb{C}^{n},0)$, $n\geq 2$. Note that $\mathbb{P}$ is a $\mathbb{P}^{n-1}$-bundle over $(\mathbb{C}^{n},0)$, whose fiber $\mathbb{P}T_{z}^{*}\mathbb{C}^{n}$ over $z\in\mathbb{C}^{n}$ will be thought of as the set of complex hyperplanes in $T_{z}^{*}\mathbb{C}^{n}$. Let $\pi:\mathbb{P}\rightarrow(\mathbb{C}^{n},0)$ be the usual projection.
\par The regular part $M_{reg}$ of $M$ can be lifted to $\mathbb{P}$: just take, for every
$z\in M_{reg}$, the complex hyperplane 
\begin{equation}
T_{z}^{\mathbb{C}}M_{reg}=T_{z}M_{reg}\cap i(T_{z}M_{reg})\subset T_{z}\mathbb{C}^{n}. 
\end{equation}
We call 
\begin{equation}
M'_{reg}\subset\mathbb{P} 
\end{equation}
this lifting of $M_{reg}$. We remark that it is no more a hypersurface: its (real) dimension
$2n-1$ is half of the real dimension of $\mathbb{P}T^{*}\mathbb{C}^{n}$. However, it is still ``Levi-flat'', in a
sense which will be precised below.

\par Take now a point $y$ in the closure $\overline{M'_{reg}}$ projecting on $\mathbb{C}^{n}$ to a point $x\in \overline{M}$. Now, we shall consider the following results, which are adapted from \cite{brunella}.
\begin{lemma}\label{brunella-lemma}
 There exist, in a germ of neighborhood $U_{y}\subset \mathbb{P}T^{*}(\mathbb{C}^{n},0)$ of $y$, a germ of real analytic subset $N_{y}$
of dimension $2n-1$ containing $M'_{reg}\cap U_{y}$.
\end{lemma}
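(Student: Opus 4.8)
The plan is to describe $M'_{reg}$ explicitly by means of a complexified defining function of $M$, to observe that it is a semianalytic subset of $\mathbb{P}$ of dimension $2n-1$, and then to take for $N_{y}$ its real-analytic Zariski closure at $y$; the content of the lemma is then concentrated in the fact that this closure is again of dimension $2n-1$.

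First, since $x=\pi(y)$ lies in $\overline{M}=M$, on a connected neighbourhood $V_{x}\subset\mathbb{C}^{n}$ of $x$ we may write $M=(h=0)$ for a reduced germ of real-analytic function $h$, so that $M_{reg}=(h=0)\setminus(dh=0)$; complexifying, there is a holomorphic $h^{\mathbb{C}}(z,w)$ on $V_{x}\times\overline{V_{x}}$ with $h^{\mathbb{C}}(z,\bar z)=h(z)$ and $\overline{h^{\mathbb{C}}(z,w)}=h^{\mathbb{C}}(\bar w,\bar z)$. Since $h$ is real-valued, $dh=2\,\mathrm{Re}\bigl(\sum_{j}\partial_{z_{j}}h^{\mathbb{C}}(z,\bar z)\,dz_{j}\bigr)$, hence $T^{\mathbb{C}}_{z}M_{reg}=\ker\bigl(\sum_{j}\partial_{z_{j}}h^{\mathbb{C}}(z,\bar z)\,dz_{j}\bigr)$ for $z\in M_{reg}$, and therefore
$$M'_{reg}=\Bigl\{\bigl(z,[\partial_{z}h^{\mathbb{C}}(z,\bar z)]\bigr)\in\mathbb{P}\;:\;z\in M_{reg}\Bigr\}.$$
In particular $M'_{reg}$ is the graph over the semianalytic set $M_{reg}$ of a map which is real-analytic on the open set $\{\partial_{z}h^{\mathbb{C}}(z,\bar z)\neq 0\}\supset M_{reg}$; thus $M'_{reg}$ is a semianalytic subset of $\mathbb{P}$ and, being such a graph over the manifold $M_{reg}$, a real-analytic submanifold of dimension $2n-1$.

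Now I would let $N_{y}$ be the germ at $y$ of the real-analytic Zariski closure of $M'_{reg}\cap U_{y}$, that is, the zero-set of the ideal of germs at $y$ of real-analytic functions vanishing on $M'_{reg}$. By construction $N_{y}$ is a germ of real-analytic subset of $U_{y}$ containing $M'_{reg}\cap U_{y}$, and $\dim_{\mathbb{R}}N_{y}\geq 2n-1$ since every neighbourhood of $y$ meets the $(2n-1)$-dimensional manifold $M'_{reg}$. The whole statement therefore reduces to $\dim_{\mathbb{R}}N_{y}\leq 2n-1$, which is the step I expect to be the main obstacle: it is exactly the assertion that the real-analytic Zariski closure of a $d$-dimensional semianalytic set is again of dimension $d$. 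This is a standard fact of semianalytic geometry (\L ojasiewicz, Bierstone--Milman); the underlying reason is that, near any of its points, a $d$-dimensional semianalytic set is a finite union of semianalytic pieces each open in a component of a real-analytic set of dimension $\leq d$, equivalently (by complexification) that the complex-analytic Zariski closure of a $d$-dimensional real semianalytic set is complex $d$-dimensional. One can also follow Brunella \cite{brunella} and produce a concrete such $N_{y}$ by realising $M'_{reg}$ inside the conormal variety of the family of Segre varieties $Q_{p}=(h^{\mathbb{C}}(\cdot,\bar p)=0)$, $p\in M$, and invoking Levi-flatness — namely that each leaf of $\mathcal{L}_{M}$ lies in its own Segre variety and that the Levi foliation has a one-dimensional leaf space, which forces the required drop of dimension for the projection of that conormal variety to $\mathbb{P}$.
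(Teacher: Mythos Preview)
The paper does not supply its own proof of this lemma; it is recorded (together with Proposition~\ref{brunella-proposition}) as a result ``adapted from \cite{brunella}'' and used as a black box. So your proposal is to be compared with Brunella's argument rather than with anything in the present paper.

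Your main route is correct. In any affine chart of $\mathbb{P}$ the set $M'_{reg}$ is cut out by the real-analytic equations $h=0$ and $p_{j}\,\partial_{z_{n}}h-\partial_{z_{j}}h=0$ together with the open condition $\partial_{z_{n}}h\neq 0$, hence is semianalytic of pure dimension $2n-1$; and the fact you isolate as the crux --- that the real-analytic Zariski closure of a $d$-dimensional semianalytic germ is again $d$-dimensional --- is indeed standard (each basic piece $\{f_{i}=0,\,g_{j}>0\}$ is open in the analytic set $\{f_{i}=0\}$, so only components of dimension $\le d$ can meet it). Note that this argument nowhere uses the Levi-flatness of $M$: the lemma, as stated, holds for the lift of the regular part of \emph{any} real-analytic hypersurface.

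Brunella's construction, which you sketch at the end, is genuinely different in spirit: he produces $N_{y}$ concretely from the family of Segre varieties $Q_{w}=(h^{\mathbb{C}}(\cdot,\bar w)=0)$, and Levi-flatness is what forces the relevant image in $\mathbb{P}$ to be $(2n-1)$-dimensional. That route is less general for the lemma in isolation, but it has the decisive practical advantage that the $N_{y}$ so built already sits inside an $n$-dimensional complex-analytic subset of $\mathbb{P}$ --- precisely the content of Proposition~\ref{brunella-proposition} --- so both statements come out of one construction. With your abstract Zariski closure you get Lemma~\ref{brunella-lemma} cheaply, but Proposition~\ref{brunella-proposition} then needs a separate argument, and that is where Levi-flatness becomes indispensable.
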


\begin{proposition}\label{brunella-proposition}
 Under the above conditions, in a germ of neighborhood $V_{y}\subset U_{y}$ of $y$, there exists a germ of complex analytic subset $Y_{y}$ of (complex) dimension $n$ containing $N_{y}\cap V_{y}$.
\end{proposition}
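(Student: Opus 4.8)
The plan is to take for $Y_y$ the \emph{intrinsic complexification} of $N_y$: the germ at $y$ of the smallest complex analytic subset of $\mathbb{P}T^{*}(\mathbb{C}^{n},0)$ containing $N_y\cap V_y$. By Noetherianity of the local ring of germs of holomorphic functions at $y$, this is the common zero set of all holomorphic germs vanishing on $N_y$, hence a genuine germ of complex analytic set, and it contains $N_y\cap V_y$ by construction. Everything then reduces to the estimate $\dim_{\mathbb{C}}Y_y=n$.

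For the lower bound, recall that $M'_{reg}$ is the union of the canonical conormal lifts of the leaves of the Levi foliation $\mathcal{L}_M$: if $L\subset M_{reg}$ is such a leaf then $L$ is a complex hypersurface of $(\mathbb{C}^{n},0)$ tangent to $\mathcal{L}_M$, so the restriction to $L$ of the lifting map $z\mapsto T^{\mathbb{C}}_{z}M_{reg}$ is exactly the conormal parametrization of $L$, a holomorphic immersion into $\mathbb{P}$ whose image is a complex submanifold of dimension $n-1$ tangent to the contact distribution $\mathcal{D}$. Hence at a smooth point $p\in M'_{reg}$ the real tangent space $T_{p}M'_{reg}$, of real dimension $2n-1$, contains a complex $(n-1)$-plane; since it cannot contain a complex subspace of dimension $n$ (that would have real dimension $2n>2n-1$), this plane equals $T_{p}M'_{reg}\cap iT_{p}M'_{reg}$ and $T_{p}M'_{reg}+iT_{p}M'_{reg}$ has complex dimension $n$. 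As $T_{p}Y_y$ is a complex subspace containing $T_{p}M'_{reg}$ it contains $T_{p}M'_{reg}+iT_{p}M'_{reg}$, so $\dim_{\mathbb{C}}Y_y\geq n$.

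For the upper bound I would use the standard fact that a real analytic set is CR-generic at a generic smooth point of its intrinsic complexification, so that $\dim_{\mathbb{C}}Y_y=\dim_{\mathbb{C}}(T_{p}N_y+iT_{p}N_y)$ for generic $p\in N_y$. We arrange, by taking the set $N_y$ of Lemma \ref{brunella-lemma} to be the germ of the real analytic closure of $M'_{reg}$, that $M'_{reg}\cap U_y$ is dense in $N_y$; then a generic $p\in N_y$ lies on $M'_{reg}$ and, both sets being smooth of dimension $2n-1$ there, $T_{p}N_y=T_{p}M'_{reg}$, whence $\dim_{\mathbb{C}}(T_{p}N_y+iT_{p}N_y)=n$ by the previous paragraph. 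One also sees this concretely: near a generic point of $M_{reg}$ take Cartan coordinates with $M=\{\mathcal{I}m(z_{n})=0\}$; there the leaves of $\mathcal{L}_M$ are $\{z_{n}=\mathrm{const}\}$, their conormal lifts all lie in $\{p_{1}=\dots=p_{n-1}=0\}$, and $M'_{reg}=\{\mathcal{I}m(z_{n})=0\}\cap\{p_{1}=\dots=p_{n-1}=0\}$ is a generic real hypersurface inside the $n$-dimensional complex submanifold $\{p_{1}=\dots=p_{n-1}=0\}$ of $\mathbb{P}$, which is therefore its complexification; the value $n$ being locally constant along the connected manifold $M'_{reg}$, it persists up to $y$.

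The real difficulty is concentrated at the central fibre, i.e.\ at the point $y$, which may project to a singular point of $M$. One must ensure, first, that $M'_{reg}\cap U_y$ is genuinely dense in $N_y$ — arranged by the choice of $N_y$ above, but already resting on the work in Lemma \ref{brunella-lemma} — and, second, that the intrinsic complexification does not jump in dimension as one approaches $y$ along $M'_{reg}$. The latter is the crux; it calls for a Remmert--Stein type argument propagating the closures of the conormal lifts of the Levi leaves across $\pi^{-1}(x)$ and across the analytic locus where the lifting map degenerates, and this is precisely the technical input imported from Brunella's analysis. Granting these points, the dimension count above completes the proof.
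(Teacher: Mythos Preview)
The paper does not actually prove this proposition: both Lemma~\ref{brunella-lemma} and Proposition~\ref{brunella-proposition} are stated without proof as results ``adapted from \cite{brunella}''. So there is no in-paper argument to compare against; the relevant benchmark is Brunella's original treatment.

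Your outline is faithful to that treatment. Brunella's argument is precisely the intrinsic-complexification approach you describe: one observes that $M'_{reg}$ is a CR submanifold of CR-dimension $n-1$ (because it is swept out by the holomorphic conormal lifts of the Levi leaves, each of complex dimension $n-1$), so at generic points its complexification has complex dimension $n$; the delicate point is then to control this complexification as one approaches the closure $\overline{M'_{reg}}$, in particular over singular points of $M$. Your dimension counts for the lower and upper bounds are correct, and your Cartan-coordinate computation is exactly the local model Brunella uses.

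You are also right to flag the last paragraph as the genuine content. Two remarks there. First, the assertion that one may take $N_y$ to be the real analytic closure of $M'_{reg}$ (so that $M'_{reg}\cap U_y$ is dense in $N_y$) is itself nontrivial and is part of what Lemma~\ref{brunella-lemma} packages; you should not treat it as a free move. Second, the ``no dimension jump'' step is not quite a Remmert--Stein argument in the classical sense: Brunella instead works with the real analytic ideal of $N_y$, complexifies it, and shows directly that the resulting complex germ at $y$ has dimension exactly $n$, using that the CR structure of $M'_{reg}$ is of constant type along the whole regular part. Your sketch gestures at this but does not carry it out; since you explicitly defer to \cite{brunella} for that step, the proposal is honest about its scope and is acceptable as a proof strategy rather than a self-contained proof.
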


\section{Proof of Theorem \ref{main-the}}
 The proof will be divided in two parts. First, we give the proof for $n=2$. The proof in dimension $n\geq 3$ will be done by reduction to the case of dimension two. 
\par First of all, we recall some results (cf. \cite{alcides}) about foliations and Levi-flats. Let $M$ and $\mathcal{F}$ be germs at $(\mathbb{C}^{2},0)$ of a real analytic Levi-flat hypersurface and of a holomorphic foliation, respectively, where $\mathcal{F}$ is tangent to $M$. Assume that:
\begin{enumerate}
 \item[(i)] $\mathcal{F}$ is defined by a germ at $0\in\mathbb{C}^{2}$ of holomorphic vector field $X$ with an isolated singularity at $0$.
\item[(ii)] $M$ is irreducible.
\end{enumerate}
Let us assume that $0$ is a reduced singularity of $X$, in the sense of Seidenberg \cite{seidenberg}. Denote the eigenvalues of $DX(0)$ by $\lambda_{1},\lambda_{2}$.
\begin{proposition}\label{proposition-reducido}
 Suppose that $X$ has a reduced singularity at $0\in\mathbb{C}^{2}$ and is tangent to a real analytic Levi-flat hypersurface $M$. Then $\lambda_{1},\lambda_{2}\neq 0$, $\lambda_{2}/\lambda_{1}\in\mathbb{Q}_{-}$ and $X$ has a holomorphic first integral. \\ In particular, in a suitable coordinates system $(x,y)$ around $0\in\mathbb{C}^{2}$, $X=\phi.Y$, where $\phi(0)\neq 0$ and 
\begin{align}
Y=q.x\partial_{x}-p.y\partial_{y}\,\,, g.c.d(p,q)=1.
\end{align}
In this coordinate system, $f(x,y):=x^{p}.y^{q}$ is a first integral of $X$. 
\end{proposition}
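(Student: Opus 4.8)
The plan is to exploit the classification of reduced singularities together with the constraint imposed by the existence of an invariant real hypersurface. First I would recall that a reduced singularity of $X$ at $0\in\mathbb{C}^2$ falls into one of two types: the \emph{non-degenerate} case, where both eigenvalues $\lambda_1,\lambda_2$ are nonzero and $\lambda_2/\lambda_1\notin\mathbb{Q}_+$, and the \emph{saddle-node} case, where exactly one eigenvalue vanishes. My first step is to rule out the saddle-node. Suppose $\lambda_2=0$; then in suitable coordinates $X$ has the Dulac normal form, and its formal/analytic separatrices are a smooth curve (the strong manifold) together with a weak separatrix which is in general only formal. The key point is that the two real-codimension-one leaves of $\mathcal{L}_M$ through $0$ (the local separatrices of $\mathcal{L}_M$, which by tangency are leaves of $\mathcal{F}$, hence separatrices of $X$) must be genuine convergent invariant curves, and moreover $M$ being irreducible forces $M$ to contain a whole real-analytic family of leaves accumulating on these; the holonomy of a saddle-node separatrix is tangent to the identity and therefore has no invariant real curve through the origin other than trivial ones — this contradicts the presence of the Levi foliation. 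So $\lambda_1,\lambda_2\neq 0$.

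Next, assuming the singularity is non-degenerate, I would show $\lambda:=\lambda_2/\lambda_1\in\mathbb{Q}_-$. The two separatrices of $X$ are the coordinate axes $\{x=0\}$ and $\{y=0\}$ (after linearizing/normalizing), and these are invariant curves of $M$ as above. The holonomy $h$ of $X$ along, say, $\{y=0\}$ is a germ of biholomorphism of $(\mathbb{C},0)$ with $h'(0)=e^{2\pi i\lambda}$. The Levi foliation provides an $h$-invariant real-analytic curve through $0$ in the transversal (coming from slicing $M$), equivalently an invariant real analytic hypersurface for $h$. By the linearization-obstruction analysis in \cite{alcides} (the same argument used there for foliations tangent to Levi-flats): if $\lambda\notin\mathbb{R}$ the linear part $z\mapsto e^{2\pi i\lambda}z$ has no invariant real curve at all; if $\lambda\in\mathbb{R}\setminus\mathbb{Q}$ then $h$ is linearizable (or at least its possible invariant real sets are incompatible with the Levi-flat being real-\emph{analytic} and irreducible near $0$); and $\lambda\in\mathbb{Q}_+$ is excluded because the singularity is reduced. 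Hence $\lambda\in\mathbb{Q}_-$, say $\lambda=-p/q$ with $\gcd(p,q)=1$, $p,q>0$.

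Finally, with $\lambda_2/\lambda_1=-p/q$ a negative rational, the singularity is a resonant (in fact linearizable-type) node/saddle of the classical "Poincaré–Dulac with $f=x^p y^q$" kind; I would invoke the standard fact that a reduced singularity with ratio in $\mathbb{Q}_-$ and \emph{both} separatrices convergent and non-dicritical admits a holomorphic first integral, and that one may choose coordinates $(x,y)$ in which $X=\phi\cdot Y$ with $\phi(0)\neq 0$ and $Y=qx\partial_x-py\partial_y$, whose first integral is $f(x,y)=x^p y^q$. (One must check the resonant node is linearizable here: the obstruction is a single resonant monomial, and its nonvanishing would again be incompatible with the existence of the real-analytic invariant hypersurface $M$, so it must vanish.) The main obstacle is the saddle-node exclusion and, within the node case, ruling out the non-linearizable resonant normal form — both reduce to the assertion that a parabolic or resonant-node holonomy germ cannot preserve a nontrivial germ of real-analytic curve through the fixed point, which is exactly the technical heart borrowed from \cite{alcides}; everything after that is the classical normal-form bookkeeping and is routine.
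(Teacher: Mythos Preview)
The paper does not actually prove this proposition: it is quoted verbatim as a known result from Cerveau--Lins~Neto \cite{alcides} (note the sentence ``First of all, we recall some results (cf.~\cite{alcides})\ldots'' immediately preceding the statement, and the absence of any proof after it). So there is no proof in the present paper to compare your attempt against; the relevant comparison is with \cite{alcides} itself.

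Your overall strategy---exclude the saddle-node, then exclude irrational ratio, then show the remaining resonant case is linearizable---is the right one and is indeed how \cite{alcides} proceeds. Two points, however, are not correct as written. First, when $\lambda_2/\lambda_1=-p/q\in\mathbb{Q}_-$ the singularity is a resonant \emph{saddle}, not a node, and its formal normal form is not controlled by ``a single resonant monomial'': the formal invariants form a full power series in $u=x^py^q$, and the analytic (Martinet--Ramis) moduli space is infinite-dimensional. In particular the sentence ``I would invoke the standard fact that a reduced singularity with ratio in $\mathbb{Q}_-$ and both separatrices convergent\ldots admits a holomorphic first integral'' is false---non-linearizable resonant saddles abound. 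The actual argument goes through the holonomy: the holonomy $h$ of a separatrix has $h'(0)$ a primitive root of unity, and the trace of $M$ on a transversal is a real-analytic $h$-invariant arc through $0$; a germ in $\dif(\mathbb{C},0)$ with periodic linear part leaving such an arc invariant must itself be periodic, and then Mattei--Moussu \cite{mattei} yields the holomorphic first integral and the normal form. Second, your treatment of $\lambda\in\mathbb{R}\setminus\mathbb{Q}$ is too loose---linearizability of $h$ is \emph{not} automatic (Cremer phenomena)---but the same invariant-arc argument handles it: no germ with $h'(0)=e^{2\pi i\lambda}$, $\lambda$ irrational, can preserve a nontrivial real-analytic arc through the origin, linearizable or not.
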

\par We call this type of singularity of $\mathcal{F}$ a saddle with first integral, (cf. \cite{Loray}, pg. 162). Now we have the following lemma.
\begin{lemma}\label{cerveau-lema}
 For any  $z_{0}\in M_{reg}$, the leaf $L_{z_{0}}$ of $\mathcal{L}_{M}$ through $z_{0}$ is closed in $M_{reg}$.
\end{lemma}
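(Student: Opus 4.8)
The plan is to exploit the fact that $M$ is real analytic and irreducible, together with the fact that $\mathcal{L}_M$ extends (as a foliation of a complex variety near the origin) to the ambient complex structure. The starting point is the observation that in our situation $M_{reg}$ is foliated by the leaves of $\mathcal{L}_M$, and each such leaf is an immersed complex hypersurface. The key local fact, proved by Cerveau--Lins Neto in \cite{alcides} and which I would cite, is that a germ of real analytic Levi-flat hypersurface is defined by the vanishing of the real part (or imaginary part) of a holomorphic function, or more precisely that after normalizing there is a holomorphic $h$ with $M \subset (\mathrm{Im}\,h = 0)$ in a neighborhood of a generic point; locally the leaves of $\mathcal{L}_M$ through such points are exactly the level hypersurfaces $\{h = c\}$, $c \in \mathbb{R}$, which are visibly closed in $M_{reg}$. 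The difficulty is to go from ``closed near a generic point'' to ``globally closed in the germ $M_{reg}$'' — i.e. controlling what happens as a leaf approaches $\sing(M)$.

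First I would fix $z_0 \in M_{reg}$ and its leaf $L = L_{z_0}$, and suppose for contradiction that $L$ is not closed in $M_{reg}$: then there is a point $w \in M_{reg} \setminus L$ in the closure $\overline{L}\cap M_{reg}$. Since $\mathcal{L}_M$ is a genuine (nonsingular) holomorphic foliation in a neighborhood $W$ of $w$ inside $M_{reg}$, and $L$ accumulates on the leaf $L_w$ through $w$, the closure $\overline{L}\cap W$ would be a nontrivial union of plaques, hence $\overline{L}$ restricted to $M_{reg}$ would be a closed analytic-type set of positive codimension carrying infinitely many leaves near $w$. The idea is then to globalize $L$ to a complex-analytic object: by Proposition \ref{brunella-proposition} (adapted from Brunella \cite{brunella}), the lifting $M'_{reg}$ is contained, near each boundary point, in a germ of complex analytic subset $Y_y$ of pure dimension $n$ in $\mathbb{P}$; pushing down, $\overline{L}$ is contained in a germ of complex analytic hypersurface $V \subset (\mathbb{C}^n,0)$. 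The crucial point is that, because $M$ has only finitely many invariant analytic subvarieties through the origin (this hypothesis from the main theorem is exactly what rules out a continuum of candidates), and because $V$ must be one of them (its generic points lie on leaves of $\mathcal{L}_M$), $\overline{L}$ is actually contained in a single irreducible complex hypersurface $V$; but then $L$, being an open subset of $V_{reg}$ accumulating on another leaf inside $V_{reg}$, forces $V$ to be reducible or forces two distinct leaves to coincide — contradiction.

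Concretely, once $\overline L \subseteq V$ with $V$ irreducible complex-analytic of dimension $n-1$, the set $V \cap M_{reg}$ is open and closed in $V_{reg} \cap M_{reg}$ and is a union of leaves of $\mathcal{L}_M$; by irreducibility of $V$ it is a single leaf near a generic point, namely $L$ itself, so $V_{reg}\cap M_{reg} = L$, whence $\overline{L}\cap M_{reg} = (V \cap M_{reg}) \cap M_{reg} = L$ is closed. The main obstacle I anticipate is precisely the globalization step: showing that the local complex hypersurfaces $Y_y$ furnished by Proposition \ref{brunella-proposition} at different boundary points glue into a single irreducible germ $V$ through the origin, rather than an infinite family — this is where the finiteness-of-invariant-subvarieties hypothesis does the real work, and one must check that the leaf $L$ cannot ``escape'' to infinitely many distinct such $V$'s. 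A secondary point requiring care is the behavior over the discriminant $\Delta_{\mathcal{W}}$, but since $\mathcal{L}_M$ is being studied on $M_{reg}$ directly and not on the variety $X$, the argument above stays on the base and avoids that issue.
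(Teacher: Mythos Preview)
The paper does not prove this lemma at all: it is one of the results explicitly ``recalled'' from Cerveau--Lins Neto \cite{alcides} at the start of Section~5, and no argument is given. So there is no in-paper proof to compare your proposal against; what you have written is an attempt at an independent proof.

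Your argument has two genuine gaps.

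First, you import the hypothesis ``finitely many invariant analytic subvarieties through the origin'' from Theorem~\ref{main-the}. But Lemma~\ref{cerveau-lema} is stated in the general setting fixed just before Proposition~\ref{proposition-reducido}: a germ of irreducible real analytic Levi-flat $M$ at $(\mathbb{C}^2,0)$ tangent to a foliation $\mathcal{F}$ with an isolated singularity. No finiteness (non-dicriticality) assumption is in force. The result in \cite{alcides} holds without it; if your argument really needs it, you are proving something strictly weaker than what is asserted and later used.

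Second, and more seriously, the step where you ``push down'' the complex analytic set $Y_y$ of Proposition~\ref{brunella-proposition} to obtain a complex hypersurface $V\subset(\mathbb{C}^n,0)$ containing $\overline{L}$ does not work. The set $Y_y$ lives in $\mathbb{P}T^{*}(\mathbb{C}^n,0)$ and has complex dimension $n$; in the paper's own use of this proposition (Lemma~\ref{sing-int}) it is identified with the germ of the surface $S$ (or $X$) associated to the web, whose projection to the base is a branched covering of degree $k$, hence surjective. Projecting $Y_y$ therefore returns an open piece of $(\mathbb{C}^n,0)$, not a hypersurface, and the containment $\overline{L}\subset V$ is unjustified by this route. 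The Brunella machinery is designed to recover the web variety in $\mathbb{P}$, not to trap individual Levi leaves in analytic hypersurfaces of the base.

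The argument in \cite{alcides} is more elementary and stays on the base: one exploits directly that each Levi leaf sits inside a leaf of the ambient holomorphic foliation $\mathcal{F}$, together with Cartan's local normal form at smooth points and the real-analytic irreducibility of $M$. If you want a self-contained proof, that is the direction to pursue rather than lifting to $\mathbb{P}$.
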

\subsection{Planar webs} 
\par A $k$-web $\mathcal{W}$ on $(\mathbb{C}^{2},0)$ can be written in coordinates $(x,y)\in\mathbb{C}^{2}$ by
 $$\omega=a_{0}(x,y)(dy)^{k}+a_{1}(x,y)(dy)^{k-1}(dx)+\ldots+a_{k}(x,y)(dx)^{k}=0,$$
where the coefficients $a_{j}\in\mathcal{O}_{2}$, $j=1,\ldots,k$. We set $$U=\{(x,y,[adx+bdy])\in\mathbb{P}T^{*}(\mathbb{C}^{2},0):a\neq 0\}$$ and $$V=\{(x,y,[adx+bdy])\in\mathbb{P}T^{*}(\mathbb{C}^{2},0):b\neq 0\}.$$ 
 Note that $\mathbb{P}T^{*}(\mathbb{C}^{2},0)=U\cup V$. Suppose that $(S,\pi_{S},\mathcal{F}_{S})$ define $\mathcal{W}$,  in the coordinates $(x,y,p)\in U$, where $p=\frac{dy}{dx}$, we have 
\begin{enumerate}
 \item $S\cap U=\{(x,y,p)\in\mathbb{P}T^{*}(\mathbb{C}^{2},0):F(x,y,p)=0\},$
where $$F(x,y,p)=a_{0}(x,y)p^{k}+a_{1}(x,y)p^{k-1}+\ldots+a_{k}(x,y).$$
 Note that $S$ is possibly singular at $0$.
\item $\mathcal{F}_{S}$ is defined by $\alpha|_{S}=0$, where $\alpha=dy-pdx$.
\item The criminant set $R$ is defined by the equations $$F(x,y,p)=F_{p}(x,y,p)=0.$$
\end{enumerate}
\par In $V$ the coordinate system is $(x,y,q)\in\mathbb{C}^{3}$, where $q=\frac{1}{p}$, the equations are similar.

\subsection{Proof in dimension two}

\par Let $\mathcal{W}$ be a $k$-web tangent to $M$ Levi-flat and let us consider $S$, $\pi$ be as before. 
The idea is to use Lemma \ref{web-integral}, assume that $\mathcal{W}$ is defined by 
\begin{equation}\label{web-equation}
\omega=a_{0}(x,y)(dy)^{k}+a_{1}(x,y)(dy)^{k-1}dx+\ldots+a_{k}(x,y)(dx)^{k}=0,
\end{equation}
where the coefficients $a_{j}\in\mathcal{O}_{2}$, $j=1,\ldots,k$ and $a_{0}(0,0)=1$.

\begin{lemma}\label{lemma-intersection}
 Under the hypotheses of Theorem \ref{main-the} and above conditions, the surface $S$ is irreducible and $S\cap\pi^{-1}(0)$ contains just a number finite of points. See figure 1.
\end{lemma}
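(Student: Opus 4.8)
The plan is to prove the two assertions separately, exploiting the normalization hypothesis $a_0(0,0)=1$ (equivalently $a_{0,\dots,0,k}(0)\neq0$), which guarantees that the fiber $\pi^{-1}(0)$ is not contained in $S$ and that $S$ is the graph-type hypersurface $\{F=0\}$ in the affine chart $U$ with $F(x,y,p)=a_0p^k+\dots+a_k$. First I would establish irreducibility of $S$. Since $\mathcal{W}$ is assumed irreducible, by the Remark following the criminant-set definition the defining symmetric form $\omega$ is irreducible in $\mathcal{O}_2[dx,dy]$; dehomogenizing by setting $p=dy/dx$ (legitimate because $a_0(0)\neq0$, so no spurious component at infinity is created) shows $F(x,y,p)$ is irreducible in $\mathcal{O}_2[p]$, hence the hypersurface $\{F=0\}\subset U$ is irreducible, and its closure $S$ in $\mathbb{P}T^*(\mathbb{C}^2,0)$ is irreducible as well. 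This is the easy half.

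The substantive point is the finiteness of $S\cap\pi^{-1}(0)$. Here I would argue by contradiction: if $S\cap\pi^{-1}(0)$ were infinite, then since $\pi^{-1}(0)\cong\mathbb{P}^1$ is a curve and $S\cap\pi^{-1}(0)$ is an analytic subset of it, it would have to be all of $\mathbb{P}^1$, i.e. $\pi^{-1}(0)\subset S$. In the chart $U$ this forces $F(0,0,p)\equiv0$ as a polynomial in $p$, so every coefficient $a_j(0,0)=0$ — contradicting $a_0(0,0)=1$. (One must also check the point at infinity $q=1/p=0$ of the fiber, handled in the chart $V$; there the analogous leading coefficient is $a_k$, and if $a_k(0)=0$ the point at infinity lies on $S$, but the full fiber being in $S$ still forces all $a_j(0)=0$, again a contradiction.) Thus $\pi^{-1}(0)\not\subset S$, and since $S\cap\pi^{-1}(0)$ is a proper analytic subset of $\mathbb{P}^1$ it is finite. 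I would then note that this argument does not even use tangency to $M$ or the finiteness-of-invariant-subvarieties hypothesis — it follows purely from the normalization $a_0(0)\neq0$ — so the lemma is really a normalization lemma setting up the geometry (the picture in Figure 1) for the rest of the proof of Theorem \ref{main-the}.

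I do not anticipate a serious obstacle for this particular lemma; the only technical care needed is the bookkeeping between the two affine charts $U$ and $V$ of $\mathbb{P}T^*(\mathbb{C}^2,0)$ to make sure the closure $S$ meets the fiber $\pi^{-1}(0)$ only in the finitely many points coming from the (nonzero) polynomial $F(0,0,\cdot)$ together with, possibly, the single point at infinity — and to confirm that the hypothesis $a_0(0,0)=1$ genuinely rules out the degenerate case $\pi^{-1}(0)\subset S$. The real difficulty of Theorem \ref{main-the} lies downstream — producing the holomorphic first integral of $\mathcal{F}_S$ via Proposition \ref{proposition-reducido} after a Seidenberg reduction, and then descending it through Lemma \ref{web-integral} — but that is beyond the scope of this lemma.
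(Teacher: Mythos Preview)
Your argument is correct. For irreducibility you do exactly what the paper does (irreducibility of $\mathcal{W}$ forces irreducibility of $S$), only with more detail about the dehomogenization. For finiteness your route is slightly different from the paper's: the paper's one-line justification invokes \emph{both} the hypothesis that $\mathcal{W}$ has only finitely many invariant analytic subvarieties through $0$ \emph{and} the normalization (\ref{web-equation}) with $a_0(0,0)=1$, whereas you show that the normalization alone already does the job --- $F(0,0,p)$ is a monic degree-$k$ polynomial in $p$, hence has at most $k$ zeros in the chart $U$, and the point at infinity of the fiber is excluded since in the chart $V$ the equation at $(0,0,q=0)$ reads $a_0(0,0)=1\neq 0$. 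Your observation that the finitely-many-invariants hypothesis is not needed here is correct; that hypothesis is used downstream (to ensure $\mathcal{F}_S$ is non-dicritical at $p_0$), not in this lemma. So your proof is a cleaner, purely algebraic version of what the paper sketches; nothing is missing.
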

\begin{proof}
Since $\mathcal{W}$ is irreducible so is $S$. On the other hand, $S\cap\pi^{-1}(0)$ is finite because 
$\mathcal{W}$  has a finite number of invariant analytic leaves through the origin and is defined as in \ref{web-equation}.
\end{proof}

\begin{figure}
\begin{center}
\includegraphics[scale=0.85]{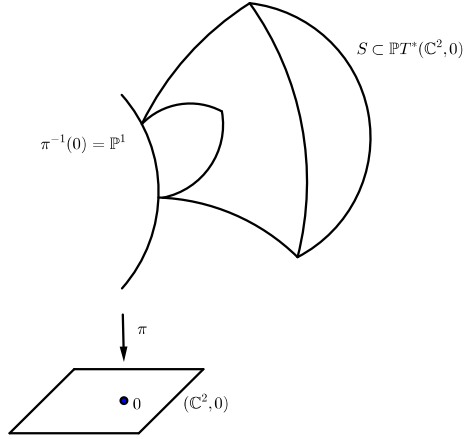}
\end{center}
\caption{$S\cap\pi^{-1}(0)$}
\end{figure}

\par We can assume without lost of generality that $S\cap\pi^{-1}(0)$ contains just one point, in the case general, 
the idea of the proof is the same. Then in the coordinate system $(x,y,p)\in\mathbb{C}^{3}$, where $p=\frac{dy}{dx}$, we have $\pi^{-1}(0)\cap S=\{p_{0}=(0,0,0)\}$, which implies that $S$ must be singular at $p_{0}\in\mathbb{P}T^{*}(\mathbb{C}^{2},0)$. In particular, $(S,p_{0})$ the germ of $S$ at $p_{0}$ is
defined by $F^{-1}(0)$, where $$F(x,y,p)=p^{k}+a_{1}(x,y)p^{k-1}+\ldots+a_{k}(x,y),$$
and $a_{1},\ldots,a_{k}\in\mathcal{O}_{2}$. Let $\mathcal{F}_{S}$ be the foliation defined by $\alpha|_{S}=0$. 
The assumptions implies that $\mathcal{F}_{S}$ is a non-dicritical foliation with an isolated singularity at $p_{0}$. 
\par Recall that a germ of foliation $\mathcal{F}$ at $p_{0}\in S$ is dicritical if it has infinitely many
analytic separatrices through $p_{0}$. Otherwise it is called non-dicritical.
\par Let $M'_{reg}$ be the lifting of $M_{reg}$ by $\pi_{S}$, and denote by $\sigma:(\tilde{S},D)\rightarrow(S,p_{0})$ the resolution of singularities of $S$ at $p_{0}$. Let $\tilde{\mathcal{F}}=\sigma^{*}(\mathcal{F}_{S})$ be the pull-back of 
$\mathcal{F}_{S}$ under $\sigma$. See figure 2.
\begin{lemma}\label{sing-int}
 In the above situation. The foliation 
$\tilde{\mathcal{F}}$ has only singularities of saddle with first integral type in $D$.
\end{lemma}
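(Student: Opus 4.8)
The plan is to analyze the behavior of the lifted Levi foliation $\widetilde{\mathcal F}$ at each singularity appearing on the exceptional divisor $D$ and to show, via Proposition \ref{proposition-reducido}, that every such singularity is a saddle with first integral. First I would apply Seidenberg's theorem to reduce the singularity of $\mathcal F_S$ at $p_0$: after finitely many blow-ups $\sigma:(\widetilde S,D)\to(S,p_0)$ the foliation $\widetilde{\mathcal F}=\sigma^*\mathcal F_S$ has only reduced singularities along $D$. Since $\mathcal F_S$ is non-dicritical at $p_0$ (as established in the preceding paragraph, using that $\mathcal W$ has finitely many invariant analytic subvarieties through the origin), the divisor $D$ is not invariant-free in the dicritical sense — more precisely, no component of $D$ is dicritical — so all components of $D$ are $\widetilde{\mathcal F}$-invariant and the singularities of $\widetilde{\mathcal F}$ on $D$ are genuine reduced singularities to which Proposition \ref{proposition-reducido} can be addressed.

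The key point is the tangency to the Levi-flat hypersurface. I would lift $M_{reg}$ through $\pi_S$ to $M'_{reg}\subset S$ and then pull it back by $\sigma$ to obtain a germ of real analytic Levi-flat set $\widetilde M'_{reg}\subset\widetilde S$ whose Levi foliation is $\widetilde{\mathcal F}$ restricted to it; by Lemma \ref{brunella-lemma} and Proposition \ref{brunella-proposition} this set is contained in a real analytic set of the right dimension which is in turn contained in a germ of complex hypersurface, so $\widetilde M'_{reg}$ is (the regular part of) a genuine germ of Levi-flat hypersurface in $\widetilde S$ at each point of $D\cap\overline{\widetilde M'_{reg}}$. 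Because the Levi leaves are the leaves of $\widetilde{\mathcal F}$, the foliation $\widetilde{\mathcal F}$ is tangent to this Levi-flat hypersurface near every singularity $q\in D$ lying in $\overline{\widetilde M'_{reg}}$. At such a $q$, hypothesis (i)--(ii) of the setup preceding Proposition \ref{proposition-reducido} hold ($\widetilde{\mathcal F}$ is reduced, hence given by a vector field with isolated singularity; irreducibility of $M$ is preserved), so Proposition \ref{proposition-reducido} forces the eigenvalue ratio to lie in $\mathbb Q_-$ and yields a holomorphic first integral $x^py^q$: that is exactly a saddle with first integral.

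It remains to rule out singularities $q\in D$ that are \emph{not} in $\overline{\widetilde M'_{reg}}$, or to argue they do not occur. Here I would use Lemma \ref{cerveau-lema}: the leaves of $\mathcal L_M$ are closed in $M_{reg}$, and their lifts accumulate onto $D$; combined with the non-dicriticality (finitely many separatrices) this should show that $\overline{\widetilde M'_{reg}}$ meets every component of $D$, and in fact passes through every singularity of $\widetilde{\mathcal F}$ on $D$ — a reduced singularity of a non-dicritical foliation has both separatrices, at least one of which lies in $D$, and the other must be a local separatrix contained in $\overline{\widetilde M'_{reg}}$ since that is where the Levi leaves go. Thus every singularity of $\widetilde{\mathcal F}$ on $D$ is of saddle-with-first-integral type. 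The main obstacle I anticipate is precisely this last step: controlling the position of $\overline{\widetilde M'_{reg}}$ relative to all of $D$ after resolution, i.e. making sure no ``extra'' reduced singularity on $D$ escapes the tangency argument — this is where the hypothesis that $\mathcal W$ has only finitely many invariant analytic subvarieties through the origin, together with Lemma \ref{cerveau-lema}, must be used with care.
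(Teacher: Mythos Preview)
Your approach is essentially the paper's: reduce via Seidenberg, use non-dicriticality to make $D$ invariant, build a Levi-flat hypersurface in $S$ from $M'_{reg}$ via Lemma~\ref{brunella-lemma} and Proposition~\ref{brunella-proposition}, and apply Proposition~\ref{proposition-reducido} at each singularity on $D$. The difference lies in the order of operations and in how the obstacle you flag at the end is dispatched.

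The paper constructs the Levi-flat hypersurface $N$ on $S$ \emph{before} resolving: for each $y\in\overline{M'_{reg}}$ one gets a local piece $M'_y=N_y\cap V_y\subset S_y$ (using that the complex $n$-dimensional set $Y_y$ of Proposition~\ref{brunella-proposition} must coincide with the germ of $S$ at $y$), and these pieces patch into a single real analytic Levi-flat $N\subset S$ tangent to $\mathcal F_S$ and passing through $p_0$. Only then does one take the strict transform $\tilde N$ under $\sigma$, and the key claim is simply $\tilde N\supset D$: since $N$ contains $p_0$ and $\mathcal F_S$ is non-dicritical, the one-parameter family of Levi leaves of $N$ near $p_0$ must, after resolution, accumulate on the invariant divisor $D$. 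This single inclusion puts \emph{every} singularity of $\tilde{\mathcal F}$ on $D$ inside $\tilde N$, and Proposition~\ref{proposition-reducido} finishes the proof at once.

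Your route---pull back $M'_{reg}$ after resolution and then argue singularity-by-singularity that the closure reaches each one, invoking Lemma~\ref{cerveau-lema}---is more laborious and not what the paper does; Lemma~\ref{cerveau-lema} is not used here at all (it enters only later, in the holonomy argument). Reordering your construction to build $N$ on $S$ first makes the obstacle you anticipate disappear.
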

\begin{proof}
 Let $y\in\overline{M'_{reg}}$, it follows from Lemma \ref{brunella-lemma} the existence, in a neighborhood $U_{y}\subset \mathbb{P}T^{*}(\mathbb{C}^{2},0)$ containing $y$, of a real analytic subset $N_{y}$ of dimension $3$ containing $M'_{reg}\cap U_{y}$. Then by Proposition \ref{brunella-proposition}, there exists, in a neighborhood $V_{y}\subset U_{y}$ of $y$, a complex analytic subset $Y_{y}$ of (complex) dimension $2$ containing $N_{y}\cap V_{y}$. As germs at $y$, we get $Y_{y}=S_{y}$ then $N_{y}\cap V_{y}\subset S_{y}$, we have that $N_{y}\cap V_{y}$ is a real analytic hypersurface in $S_{y}$, and it is Levi-flat because each irreducible component contains a Levi-flat piece (cf. \cite{burns}, Lemma 2.2).
\par Let us denote $M'_{y}=N_{y}\cap V_{y}$. The hypotheses implies that $\mathcal{F}_{S}$ is tangent to $M'_{y}$. These local constructions are sufficiently 
canonical to be patched together, when $y$ varies on $\overline{M'_{reg}}$: if $S_{y_{1}}\subset V_{y_{1}}$ and $S_{y_{2}}\subset V_{y_{2}}$ are as above,
 with $M'_{reg}\cap V_{y_{1}}\cap V_{y_{2}}\neq \emptyset$, then $S_{y_{2}}\cap( V_{y_{1}}\cap V_{y_{2}})$ and $S_{y_{1}}\cap( V_{y_{1}}\cap V_{y_{2}})$ 
have some common irreducible components containing $M'_{reg}\cap V_{y_{1}}\cap V_{y_{2}}$, so that $M'_{y_{1}}$,  
$M'_{y_{2}}$ can be glued by identifying those components.
 In this way, we obtain a Levi-flat hypersurface $N$ on $S$ tangent to $\mathcal{F}_{S}$. 
\par By doing additional blowing-ups if necessary, we can suppose that $\tilde{\mathcal{F}}$ has reduced singularities. Since $\mathcal{F}_{S}$ is non-dicritical, all irreducible components of $D$ are $\tilde{\mathcal{F}}$-invariants. Let $\tilde{N}$ be the strict transform of $N$ under $\sigma$, then $\tilde{N}\supset D$. In particular, $\tilde{N}$ contains all singularities of $\tilde{\mathcal{F}}$ in $D$. It follows from Proposition \ref{proposition-reducido} that all singularities of $\tilde{\mathcal{F}}$ are saddle with first integral.
\end{proof}
\begin{figure}
\begin{center}
\includegraphics[scale=0.75]{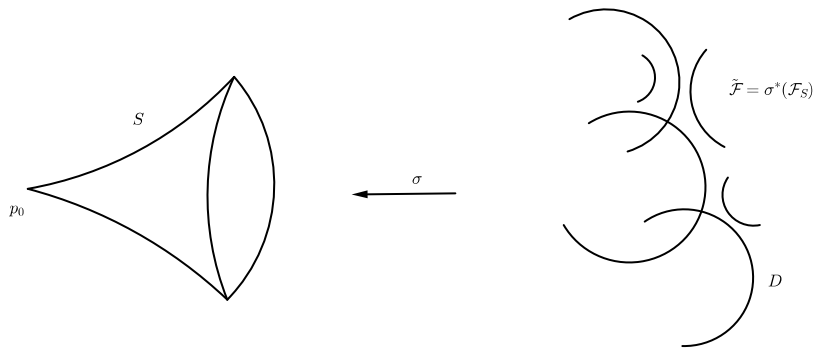}
\caption{Resolution of singularities of $S$ at $p_{0}$.}
\end{center}
\end{figure}
\subsection{End of the proof of Theorem \ref{main-the} in dimension two}
\par The idea is to prove that $\mathcal{F}_{S}$ has a holomorphic first integral.
Since $D$ is invariant by $\tilde{\mathcal{F}}$, i.e., it is the union of leaves and singularities of $\tilde{\mathcal{F}}$, we have $S:=D\backslash \sing(\tilde{\mathcal{F}})$ is a leaf of $\tilde{\mathcal{F}}$. Now, fix $p\in S$
and a transverse section $\sum$ through $p$. By Lemma \ref{sing-int}, the singularities of $\tilde{\mathcal{F}}$ in $D$ are saddle with first integral types. Therefore the transverse section $\sum$ is complete, (see \cite{Loray}, pg. 162).  
Let $G\subset \dif(\sum,p)$ be the Holonomy group of the leaf $S$ of $\tilde{\mathcal{F}}$. It follows from Lemma \ref{cerveau-lema} that all leaves of $\mathcal{F}_{S}$ through points of $N_{reg}$ are closed in $N_{reg}$. This implies that all transformations of $G$ have finite order and $G$ is linearizable. According to \cite{mattei}, $\mathcal{F}_{S}$ has a non-constant holomorphic first integral. Finally from Lemma \ref{web-integral}, $\mathcal{W}$ has a first integral 
as follows: $$P(z)=f_{0}(x,y)+z.f_{1}(x,y)+\ldots+z^{k-1}.f_{k-1}(x,y)+
z^{k},$$ where $f_{0},f_{1},\ldots,f_{k-1}\in\mathcal{O}_{2}$.
 
\subsection{Proof in dimension $n\geq 3$}
Let us give an idea of the proof. First of all, we will prove that there is a holomorphic embedding $i:(\mathbb{C}^{2},0)\rightarrow(\mathbb{C}^{n},0)$ with the following properties:
\begin{enumerate}
\item[(i)] $i^{-1}(M)$ has real codimension one on $(\mathbb{C}^{2},0)$.
\item[(ii)] $i^{*}(\mathcal{W})$ is a $k$-web on $(\mathbb{C}^{2},0)$ tangent to $i^{-1}(M)$.
 \end{enumerate} 
Set $E:=i(\mathbb{C}^{2},0)$. The above conditions and Theorem \ref{main-the} in dimension two imply
that $\mathcal{W}|_{E}$ has a non-constant holomorphic first integral, say $g=f_{0}+z.f_{1}+\ldots+z^{k-1}.f_{k-1}+z^{k}$, where $f_{0},\ldots,f_{k-1}\in\mathcal{O}_{2}$. After that we will use a lemma to prove that 
$g$ can be extended to a holomorphic germ $g_{1}$, which is a first integral of $\mathcal{W}$.
\par Let $\mathcal{F}$ be a germ at $0\in\mathbb{C}^{n}$, $n\geq 3$, of a
holomorphic codimension one foliation, tangent to a real analytic
hypersurface $M$. Let us suppose that $\mathcal{F}$ is defined by $\omega=0$, where $\omega$ is a germ at $0\in\mathbb{C}^{n}$ of an integrable holomorphic 1-form with $cod_{\mathbb{C}^{n}}(\sing(\omega))\geq 2$. We say that a holomorphic embedding $i:(\mathbb{C}^{2},0)\rightarrow(\mathbb{C}^{n},0)$ is transverse to $\omega$ if $cod_{\mathbb{C}^{n}}(\sing(\omega))=2$, which means in fact that, as a germ of set, we have $\sing(i^{*}(\omega))=\{0\}$. Note that the definition is independent of the particular germ of holomorphic 1-form which represents $\mathcal{F}$. Therefore, we will say that the embedding $i$ is transverse to $\mathcal{F}$ if it
is transverse to some holomorphic 1-form $\omega$ representing $\mathcal{F}$.
\par We will use the following lemma of \cite{alcides}.
\begin{lemma}\label{levi-trans}
In the above situation, there exists a 2-plane $E\subset\mathbb{C}^{n}$, transverse
to $\mathcal{F}$, such that the germ at $0\in E$ of $M\cap E$ has real codimension one. 
\end{lemma}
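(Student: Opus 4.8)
The statement asks two things of $E$: that it be transverse to $\mathcal F$, and that $M\cap E$ be an honest real hypersurface germ in $E$. The plan is to satisfy both with a generic $2$-plane $E$ through $0$, after extracting from the Levi-flat structure of $M$ enough complex-analytic geometry to control the dimension of $M\cap E$. The transversality is the soft half: a representative $\omega$ of $\mathcal F$ has $\sing(\omega)$ of complex codimension $\ge 2$, hence complex dimension $\le n-2$, so a standard incidence-variety count over the Grassmannian of $2$-planes through $0$ shows that for $E$ outside a proper subvariety one has $E\cap\sing(\omega)=\{0\}$ as germs; for such $E$, $\sing(i^{*}\omega)=\{0\}$, $cod_{\mathbb C^{n}}(\sing(\omega))=2$, and $i$ is transverse to $\mathcal F$. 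Thus transversality holds for $E$ in a dense open set, and the task is to arrange the codimension condition inside that set.

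For the codimension condition I would use that, $M$ being Levi-flat and tangent to $\mathcal F$, it is near $0$ a union of complex hypersurfaces. Each leaf of $\mathcal L_{M}$ is a leaf of $\mathcal F$, hence an immersed complex hypersurface; since the leaves of $\mathcal L_{M}$ are closed in $M_{reg}$ (Lemma \ref{cerveau-lema}) and $M$ is closed, the closure of such a leaf differs from it only by a subset of $\sing(M)$ of strictly smaller dimension, so by Remmert--Stein it is an $\mathcal F$-invariant complex analytic hypersurface contained in $M$. Consequently $M$ near $0$ is the union $\bigcup_{c\in I}V_{c}$ of a real one-parameter family of complex hypersurfaces --- the closures of the Levi leaves --- pairwise distinct, since distinct Levi leaves stay distinct (being closed in $M_{reg}$) and can agree only inside $\sing(M)$.

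Now pick $E$ generic, also satisfying the transversality above and $E\not\subset M$ (automatic once $M\ne\mathbb C^{n}$). For such $E$ every $V_{c}\cap E$ is a complex curve in $E\cong\mathbb C^{2}$, so $M\cap E=\bigcup_{c\in I}(V_{c}\cap E)$ has real dimension at most $3$ at $0$; the point to establish is that it has real dimension exactly $3$, i.e. that for generic $E$ the curves $V_{c}\cap E$ genuinely move with $c$, so that their union really is a nontrivial real one-parameter family. Granting this, $M\cap E$ is a proper real-analytic subset of $E$ of real dimension $3$, i.e. of real codimension one at $0$, and it is transverse to $\mathcal F$ --- the desired $E$. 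One can reach the same conclusion via E. Cartan's normal form: near a smooth point of $M$ outside $\sing(\omega)$, arbitrarily close to $0$, one has $M=\{\mathcal{I}m(w_{n})=0\}$ and $\mathcal F=\{dw_{n}=0\}$, and for generic $E$ one finds such a point at which $E$ is transverse to the Levi leaf, whence $M\cap E$ is smooth of real dimension $3$ nearby; as these points accumulate at $0$, the germ $(M\cap E,0)$ has real dimension $3$.

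The main obstacle is precisely the step that genuinely uses Levi-flatness: upgrading ``$M$ Levi-flat'' to the structural statement that $M$ near $0$ is an honest real one-parameter family of complex analytic hypersurfaces with the disjointness used above, and then checking that a generic $2$-plane detects the whole family. Genericity is indispensable here --- for Levi-flat hypersurfaces such as $\{\mathcal{I}m(z_{2}\bar z_{1})=0\}$ there are special $2$-planes, tangent to the complex geometry of the leaves, along which $M\cap E$ drops to real codimension two or becomes all of $E$ --- so the argument must combine the structural input with a careful genericity analysis near the (possibly singular, possibly cone-like) point $0$, rather than resting on dimension counts alone.
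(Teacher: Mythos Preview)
The paper gives no proof of this lemma: it is imported from Cerveau--Lins Neto \cite{alcides} (it appears immediately after the sentence ``We will use the following lemma of \cite{alcides}'') and is used as a black box. There is therefore nothing in the present paper to compare your attempt against.

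On its own merits, your sketch separates the two requirements correctly, and transversality to $\mathcal{F}$ is handled by the standard genericity count. For the codimension-one condition, your second route---via Cartan's normal form at nearby smooth points of $M$---is the right idea and is close to complete: once one checks that the $2$-planes contained in a fixed complex hyperplane form a proper subvariety of the Grassmannian, an analyticity argument shows that a generic $E$ is transverse to the Levi leaf at some $p\in M_{reg}$ arbitrarily near $0$, giving $\dim_{\mathbb{R}}(M\cap E)=3$ there and hence at $0$. Your first route, through a global leaf decomposition and Remmert--Stein, is heavier than necessary for this purpose. You are right that naive real dimension counts do not suffice and that Levi-flatness (or at least the tangency to $\mathcal{F}$) must enter; as you yourself acknowledge, what remains is to turn the genericity heuristics into a precise statement, and that is carried out in \cite{alcides}.
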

\par We say that a embedding $i$ is transverse to $\mathcal{W}$ if it is transverse to all $k$-foliations which defines $\mathcal{W}$. Now, one deduces the following
\begin{lemma}\label{web-trans}
There exists a 2-plane $E\subset\mathbb{C}^{n}$, transverse
to $\mathcal{W}$, such that the germ at $0\in E$ of $M\cap E$ has real codimension one.
\end{lemma}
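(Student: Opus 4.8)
The plan is to reduce Lemma \ref{web-trans} to Lemma \ref{levi-trans} by choosing a $2$-plane that is simultaneously transverse to each of the $k$ germs of foliations composing $\mathcal{W}$. First I would pick a generic point $p\in(\mathbb{C}^{n},0)\setminus\Delta_{\mathcal{W}}$, so that $\pi_{X}^{-1}(p)=\{q_{1},\ldots,q_{k}\}$ consists of $k$ distinct smooth points of $X$, and the projected germs $\mathcal{F}_{p}^{1},\ldots,\mathcal{F}_{p}^{k}$ are honest codimension one foliations near $p$. Each $\mathcal{F}_{p}^{i}$ is defined by an integrable $1$-form $\omega_{i}$, and under the hypothesis of item $(2)$ of Theorem \ref{main-the} (namely $cod_{X_{reg}}(\sing(X))\geq 2$) each $\omega_{i}$ can be taken with $cod_{\mathbb{C}^{n}}(\sing(\omega_{i}))\geq 2$; the product $\omega=\omega_{1}\cdots\omega_{k}$ recovers the $k$-symmetric $1$-form defining $\mathcal{W}$ near $p$. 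Being transverse to $\mathcal{W}$ means, by our definition, being transverse to each $\omega_{i}$.

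Next I would observe that, for a $2$-plane $E\subset\mathbb{C}^{n}$ through the origin, the condition of being transverse to a fixed germ of codimension one foliation $\mathcal{F}_{i}$ (with $cod(\sing(\omega_{i}))\ge 2$) is a Zariski-open, dense condition on the Grassmannian $G(2,n)$ of $2$-planes: the set of $2$-planes for which $\sing(i^{*}\omega_{i})$ fails to be $\{0\}$ is a proper analytic subset, since a generic $2$-plane meets the codimension $\geq 2$ set $\sing(\omega_{i})$ only at the origin and the restriction of $\omega_{i}$ to a generic plane keeps its singular set of codimension $2$ in that plane, i.e. a point. Hence the set of $2$-planes transverse to $\mathcal{W}=\mathcal{F}_{1}\boxtimes\cdots\boxtimes\mathcal{F}_{k}$ is the intersection of $k$ such dense open subsets, which is again dense open in $G(2,n)$. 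In particular it is nonempty, and moreover it is large enough that Lemma \ref{levi-trans} applied to one of the foliations, say $\mathcal{F}_{1}$, still leaves room: among the $2$-planes transverse to $\mathcal{F}_{1}$ for which $M\cap E$ has real codimension one — a set which Lemma \ref{levi-trans} guarantees is nonempty and which a small perturbation argument shows contains an open subset — we can find one that also lies in the (dense open) transversality locus of $\mathcal{F}_{2},\ldots,\mathcal{F}_{k}$.

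The cleanest way to organize this is: run the proof of Lemma \ref{levi-trans} verbatim for $\mathcal{F}_{1}$, but at the step where one selects the transverse $2$-plane, intersect the relevant parameter set with the finitely many nonempty Zariski-open conditions ``$E$ transverse to $\mathcal{F}_{i}$'' for $i=2,\ldots,k$; since a finite intersection of nonempty Zariski-open subsets of an irreducible variety is nonempty, and the condition ``$M\cap E$ of real codimension one'' is an open condition that holds on a set of positive measure, the intersection is still nonempty. This yields a $2$-plane $E$ transverse to every $\mathcal{F}_{i}$, hence to $\mathcal{W}$, with $\dm_{\mathbb{R}}(M\cap E)=2n-1$ inside the real $4$-dimensional $E$, i.e. real codimension one in $E$, as required.

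The main obstacle I anticipate is making precise that the real-codimension-one condition on $M\cap E$ is compatible with the complex-analytic genericity conditions on $E$: Lemma \ref{levi-trans} only asserts \emph{existence} of one good plane for a single foliation, and one must check that its proof actually produces a full-dimensional (or at least not-too-thin) family of such planes, so that it survives intersection with $k-1$ further proper analytic constraints. I would handle this by reviewing the argument of \cite{alcides} for Lemma \ref{levi-trans}, extracting from it that the set of admissible $2$-planes has nonempty interior in $G(2,n)$ (it is cut out by the non-vanishing of a real-analytic function together with an open transversality condition), and then invoking that a nonempty open set in an irreducible complex manifold cannot be contained in a proper analytic subset.
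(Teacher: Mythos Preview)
Your proposal follows essentially the same strategy as the paper: decompose $\mathcal{W}$ outside the discriminant as $\mathcal{F}_{1}\boxtimes\cdots\boxtimes\mathcal{F}_{k}$, invoke Lemma \ref{levi-trans} for one of the component foliations, and then use that simultaneous transversality to all $\mathcal{F}_{i}$ is an open dense condition on linear embeddings $\mathbb{C}^{2}\hookrightarrow\mathbb{C}^{n}$ to perturb into a plane that works for all of them at once. The paper's argument is shorter and appeals directly to transversality theory rather than spelling out the Grassmannian picture, but the content is the same.

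One point you should tighten: you apply Lemma \ref{levi-trans} to ``one of the foliations, say $\mathcal{F}_{1}$''. But Lemma \ref{levi-trans} has as a standing hypothesis that the foliation is \emph{tangent to $M$}; an arbitrary $\mathcal{F}_{i}$ need not be. The paper handles this by first observing that, because $\mathcal{W}$ is tangent to $M$, there exists some index $j$ for which $\mathcal{F}_{j}$ is tangent to the Levi foliation $\mathcal{L}_{M}$ on $M_{reg}$, and it is to this particular $\mathcal{F}_{j}$ that Lemma \ref{levi-trans} is applied. Once you insert that observation, your argument and the paper's coincide.
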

\begin{proof}
First of all, note that outside of the discriminant set of $\mathcal{W}$, we can suppose that 
$\mathcal{W}=\mathcal{F}_{1}\boxtimes\ldots\boxtimes\mathcal{F}_{k}$, where $\mathcal{F}_{1},\ldots,\mathcal{F}_{k}$ are germs of codimension one smooth foliations. Since $\mathcal{W}$ is tangent to $M$, there is a foliation $\mathcal{F}_{j}$ such that is tangent to a Levi foliation $\mathcal{L}_{M}$ on $M_{reg}$. Lemma \ref{levi-trans} implies that we can find a 2-plane $E_{0}$ transverse to $M$ and to $\mathcal{F}_{j}$.
Clearly the set of linear mappings transverse to $\mathcal{F}_{1},\ldots,\mathcal{F}_{k}$ simultaneously is open and dense in the set of linear mappings from $\mathbb{C}^{2}$ to $\mathbb{C}^{n}$, by Transversality
theory, there exists a linear embedding $i$ such that $E=i(\mathbb{C}^{2},0)$ is transverse
to $M_{reg}$ and to $\mathcal{W}$ simultaneously. 
\end{proof}

\par Let $E$ be a 2-plane as in Lemma \ref{web-trans}. It easy to check that $\mathcal{W}|_{E}$ satisfies the hypotheses of Theorem \ref{main-the}. By the two dimensional case $\mathcal{W}|_{E}$ has a non-constant first integral:
\begin{equation}
g_{0}+z.g_{1}+\ldots+z^{k-1}.g_{k-1}+z^{k},
 \end{equation}
where $g_{0},\ldots,g_{k-1}\in\mathcal{O}_{2}$.

Let $X$ be the variety associated to $\mathcal{W}$ and set $S$ be the surface associated to $\mathcal{W}|_{E}$. Observe that $\mathcal{F}_{S}$ has a non-constant holomorphic first integral $g$ defined on $S$.
\begin{lemma}
In the above situation, we have $\mathcal{F}_{X}|_{S}=\mathcal{F}_{S}$ and  $\mathcal{F}_{X}$ has a non-constant holomorphic first integral $g_{1}$ on X, such that $g_{1}|_{S}=g$.
\end{lemma}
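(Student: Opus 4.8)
The plan is first to realize $S$ as a sub-surface of $X$ on which $\mathcal{F}_X$ induces $\mathcal{F}_S$, and then to extend the first integral $g$ of $\mathcal{F}_S$ to a holomorphic first integral of $\mathcal{F}_X$ on all of $X$ by continuation along the leaves. Concretely, the inclusion $i\colon E\hookrightarrow(\mathbb{C}^n,0)$ induces, by restriction of covectors, a meromorphic map
$$\rho\colon X\cap\pi_X^{-1}(E)\dashrightarrow\mathbb{P}T^{*}E,\qquad (p,[\eta])\longmapsto (p,[\eta|_{T_pE}]),$$
defined and holomorphic wherever the hyperplane $\ker\eta\subset T_p\mathbb{C}^n$ does not contain $T_pE$. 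The transversality of $E$ to $\mathcal{W}$ furnished by Lemma \ref{web-trans} means precisely that over a generic $p\in E$ the $k$ hyperplanes of $\mathcal{W}$ meet $T_pE$ in $k$ \emph{distinct} lines; hence $\rho$ is generically one-to-one and its image is exactly the variety $S$ of $\mathcal{W}|_E=i^{*}\mathcal{W}$. Since $X$ is irreducible and $E$ is generic, $X\cap\pi_X^{-1}(E)$ is irreducible (the same reason $\mathcal{W}|_E$ is), so $\rho$ is birational and restricts to a biholomorphism onto $S$ off a proper subvariety; via its birational inverse I view $S$ as a sub-surface of $X$. Finally, the restriction-of-covectors map is a contact map --- by functoriality of the canonical contact structure on projectivized cotangent bundles one has $\rho^{*}\alpha_E=u\cdot(\alpha|_{X\cap\pi_X^{-1}(E)})$ for a unit $u$, as a short computation in affine charts confirms --- so $\rho$ conjugates $\ker(\alpha|_{X\cap\pi_X^{-1}(E)})$ with $\mathcal{F}_S$; this is the assertion $\mathcal{F}_X|_S=\mathcal{F}_S$.

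To build $g_1$, I would observe that near any point of $S_{reg}\setminus R$ a local holomorphic first integral of $\mathcal{F}_X$ restricts, by the previous paragraph, to a local first integral of $\mathcal{F}_S$, hence agrees with $g$ up to post-composition with a one-variable biholomorphism; normalizing it, one obtains a local first integral of $\mathcal{F}_X$ equal to $g$ along $S$, and I would propagate this germ by analytic continuation over the connected complex manifold $X_{reg}\setminus R$. The monodromy of the resulting a priori multivalued function is a representation of $\pi_1(X_{reg}\setminus R)$; since $E$ is generic, a Lefschetz-type theorem for complements of analytic sets (Hamm--L\^e, or Goresky--MacPherson) shows that $\pi_1(S_{reg}\setminus(R\cap S))\to\pi_1(X_{reg}\setminus R)$ is surjective, and as $g$ is genuinely single-valued on $S$ the monodromy is trivial on the image of the first group, hence trivial. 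Thus $g_1\in\mathcal{O}(X_{reg}\setminus R)$ with $g_1|_S=g$. It remains to extend $g_1$ across the criminant set $R$ and across $\sing(X)$: around each point of $(R\cap X_{reg})\setminus\sing(\mathcal{F}_X)$ the foliation $\mathcal{F}_X$ has a local holomorphic first integral, so $g_1$ extends holomorphically there, while the remaining locus together with $\sing(X)$ has codimension $\geq 2$ in $X$ by hypothesis $(2)$ of Theorem \ref{main-the}, so the second Riemann extension theorem gives $g_1\in\mathcal{O}(X)$. By construction $g_1$ is non-constant, constant on the leaves of $\mathcal{F}_X$, and restricts to $g$ on $S$.

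The step I expect to be the main obstacle is the triviality of the monodromy of $g_1$, i.e.\ that the leaf-continuation is single-valued. The argument reduces this to two inputs: the genuineness of $g$ on the generic surface section $S$ --- which is where the hypotheses ``tangent to a Levi-flat'' and ``finitely many invariant subvarieties'' are used, through the two-dimensional case and Lemma \ref{web-integral} --- and a Lefschetz-type surjectivity for fundamental groups of complements of analytic sets, which is where the genericity of the $2$-plane $E$ enters essentially. A subsidiary point, needed to know that the extended $g_1$ is holomorphic rather than merely meromorphic along $R$, is that a polar component of $g_1$ would be $\mathcal{F}_X$-invariant and of codimension one, hence would meet the generic section $S$ and produce a pole of $g|_S=g$, which has none. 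Everything else is functoriality of the contact structure or standard extension theorems.
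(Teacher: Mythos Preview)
Your approach is correct in outline but genuinely different from the paper's. The paper does not invoke any Lefschetz-type result or treat the extension as a monodromy problem at all. Instead, after remarking that $S\subset X$ (hence $\mathcal{F}_X|_S=\mathcal{F}_S$), it fixes $p\in X_{reg}\setminus\sing(\mathcal{F}_X)$, chooses a flow-box chart $\varphi:W_p\to\Delta\subset\mathbb{C}^n$ in which $\mathcal{F}_X$ is $dz_n=0$ and $S\cap W_p$ is a coordinate $2$-plane, and defines $g_p$ by pulling $g$ back along the linear projection onto that $2$-plane. The local pieces $g_p$ are then declared to agree on overlaps by the identity principle, and the resulting $g_W$ is extended across $\sing(\mathcal{F}_X)$ and $\sing(X)$ by Levi's extension theorem together with the codimension-$2$ hypothesis. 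That is the whole argument.

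Your route makes explicit what the paper's gluing step leaves implicit---namely, why the local extensions assemble to a \emph{single-valued} function on all of $X_{reg}$---by formulating it as a monodromy representation and killing it via a Hamm--L\^e surjection $\pi_1(S_{reg}\setminus(R\cap S))\twoheadrightarrow\pi_1(X_{reg}\setminus R)$. This is conceptually cleaner and identifies the real content, at the cost of importing heavier tools; note also that applying local Lefschetz here (with $S$ arising as a codimension-$(n-2)$ linear section of $X$ inside $(\mathbb{C}^n,0)\times\mathbb{P}^{n-1}$, in a germ setting) is not entirely off-the-shelf and deserves a word of justification. Conversely, the paper's flow-box argument is more elementary but, as written, does not explain why every $p\in X_{reg}\setminus\sing(\mathcal{F}_X)$ admits a chart $W_p$ that actually meets $S$; your more careful treatment of the identification $S\hookrightarrow X$ via the restriction-of-covectors map and the explicit monodromy discussion fill gaps the paper leaves to the reader.
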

\begin{proof}

\par It is easily seen that $S\subset X$ which implies that $\mathcal{F}_{X}|_{S}=\mathcal{F}_{S}$. Let us extend $g$ to $X$. Fix $p\in X_{reg}\backslash \sing(\mathcal{F}_{X})$. It is possible to find a small neighborhood $W_{p}\subset X$ of $p$ and a holomorphic coordinate chart $\varphi:W_{p}\rightarrow \triangle$, where $\triangle\subset\mathbb{C}^{n}$ is a polydisc, such that:
\begin{enumerate} 
\item[(i)] $\varphi(S\cap W_{p})=\{z_{3}=\ldots=z_{n}=0\}\cap\triangle$.
\item[(ii)]$\varphi_{*}(\mathcal{F}_{X})$ is given by $dz_{n}|_{\triangle}=0$.

\end{enumerate}
Let $\pi_{n}:\mathbb{C}^{n}\rightarrow\mathbb{C}^{2}$ be the projection defined by $\pi_{n}(z_{1},\ldots,z_{n})=(z_{1},z_{2})$ and set $\tilde{g}_{p}:=g\circ\varphi^{-1}\circ\pi_{n}|_{\triangle}$.  We obtain that $\tilde{g}$ is a holomorphic function defined in $\triangle$ and is a first integral of $\varphi_{*}(\mathcal{F}_{X})$. Let $g_{p}=\tilde{g}_{p}\circ\varphi$. Notice that, if $W_{p}\cap W_{q}\neq \emptyset$, $p$ and $q$ being regular points for $\mathcal{F}_{X}$, then we have $g_{p}|_{W_{p}\cap W_{q}}=g_{q}|_{W_{p}\cap W_{q}}$. This follows  easily form the identity principle for holomorphic functions. In particular, $g$ can be extended to $$W=\bigcup_{p\in X_{reg}\backslash \sing(\mathcal{F}_{X})}W_{p},$$ which is a neighborhood of $X_{reg}\backslash \sing(\mathcal{F}_{X})$. Call $g_{W}$ this extension.
 \par Since $cod_{X_{reg}}\sing(\mathcal{F}_{X})\geq 2$, by a theorem of Levi (cf. \cite{siu}), $g_{W}$ can be extended to $X_{reg}$, as $cod_{X_{reg}}(\sing(X))\geq 2$ this allows us to extend $g_{W}$  to $g_{1}$ as holomorphic first integral for $\mathcal{F}_{X}$, in whole $X$.
\end{proof}
\subsection{End of the proof of Theorem \ref{main-the} in dimension $n\geq 3$}
 Since $\mathcal{F}_{X}$ has a non-constant holomorphic first integral on $X$, Lemma \ref{web-integral} imply that $\mathcal{W}$ has a non-constant holomorphic first integral. 

\vspace*{0.5cm}
\noindent {\textit {Acknowledgments}}.-- 
The author is greatly indebted to Alcides Lins Neto for suggesting the problem and for many stimulating conversations. This is part of the author's Ph.D. thesis, written at IMPA. I also want to thank the referee for his suggestions.

\vskip .2in
\begin{flushleft}
Arturo Fern\'andez-P\'erez\\
{\small Departamento de Matem\'atica, UFMG}\\
{\small Av. Ant\^onio Carlos, 6627 C.P. 702,}\\
{\small 30123-970 - Belo Horizonte - MG, BRAZIL.}\\
{\small E-mail: afernan@impa.br}
\end{flushleft}

\end{document}